\allowdisplaybreaks \numberwithin{equation}{section}
\newtheorem{theorem}{Theorem}[section]
\newtheorem{proposition}[theorem]{Proposition}
\newtheorem{lemma}[theorem]{Lemma}
\newtheorem{claim}[theorem]{Claim}
\theoremstyle{definition}
\newtheorem{remark}[theorem]{Remark}
\newtheorem{example}[theorem]{Example}
\DeclareMathOperator{\irr}{irr}
\DeclareMathOperator{\conngon}{conn.gon}
\DeclareMathOperator{\covgon}{cov.gon}
\DeclareMathOperator{\gon}{gon}
\DeclareMathOperator{\codim}{codim}
\DeclareMathOperator{\Pol}{Pol}
\DeclareMathOperator{\mult}{mult}
\begin{document}

\title[Cones of lines having high contact with general hypersurfaces]{Cones of lines having high contact with general hypersurfaces and applications}

\author{Francesco Bastianelli}
\address{Francesco Bastianelli, Dipartimento di Matematica, Universit\`{a} degli Studi di Bari Aldo Moro, Via Edoardo Orabona 4, 70125 Bari -- Italy}
\email{francesco.bastianelli@uniba.it}

\author{Ciro Ciliberto}
\address{Ciro Ciliberto, Dipartimento di Matematica, Universit\`{a} degli Studi di Roma ``Tor Vergata", Viale della Ricerca Scientifica 1, 00133 Roma -- Italy}
\email{cilibert@mat.uniroma2.it}

\author{Flaminio Flamini}
\address{Flaminio Flamini, Dipartimento di Matematica, Universit\`{a} degli Studi di Roma ``Tor Vergata", Viale della Ricerca Scientifica 1, 00133 Roma -- Italy}
\email{flamini@mat.uniroma2.it}

\author{Paola Supino}
\address{Paola Supino, Dipartimento di Matematica e Fisica, Universit\`{a} degli Studi ``Roma Tre", Largo S. L. Murialdo 1, 00146 Roma -- Italy}
\email{supino@mat.uniroma3.it}

\begin{abstract}
Given a smooth hypersurface $X\subset \mathbb{P}^{n+1}$ of degree $d\geqslant 2$, we study the cones $V^h_p\subset \mathbb{P}^{n+1}$ swept out by lines having contact order $h\geqslant 2$ at a point $p\in X$.
In particular, we prove that if $X$ is general, then for any $p\in X$ and $2 \leqslant h\leqslant \min\{ n+1,d\}$, the cone $V^h_p$ has dimension exactly $n+2-h$.
Moreover, when $X$ is a very general hypersurface of degree $d\geqslant 2n+2$, we describe the relation between the cones $V^h_p$ and the degree of irrationality of $k$--dimensional subvarieties of $X$ passing through a general point of $X$. 
As an application, we give some bounds on the least degree of irrationality of $k$--dimensional subvarieties of $X$ passing through a general point of $X$, and we prove that the connecting gonality of $X$ satisfies $d-\left\lfloor\frac{\sqrt{16n+25}-3}{2}\right\rfloor\leqslant\conngon(X)\leqslant d-\left\lfloor\frac{\sqrt{8n+1}+1}{2}\right\rfloor$.
\end{abstract}

\thanks{This collaboration has benefitted of funding from the MIUR Excellence Department Project awarded to the Department of Mathematics,
University of Rome Tor Vergata (CUP: E83-C18000100006).\\
\indent The authors are members of GNSAGA of INdAM}

\maketitle

\section{Introduction}\label{sec1}

Let $X \subset \mathbb{P}^{n+1}$ be a smooth complex hypersurface of degree $d\geqslant 2$. 
Given a point $p\in X$ and an integer $h\geqslant 2$, we consider the cone $V^h_p\subset \mathbb{P}^{n+1}$ swept out by lines having intersection multiplicity at least $h$ with $X$ at $p$.
These cones reflect the geometry of hypersurfaces and occur both in the local geometry of hypersurfaces (see e.g. \cite{GH, J1, J2}) and in the study of their global properties, such as unirationality (\cite{Ci}) and their covering gonality, i.e. the least gonality of curves passing through a general point of $X$ (\cite{BCFS}).

In this paper, we study the cones $V^h_p$ of general hypersurfaces $X \subset \mathbb{P}^{n+1}$, and we apply our results to achieve some bounds concerning the degree of irrationality of $k$--dimensional subvarieties of $X$ passing through general points of $X$, where we recall that the degree of irrationality of an irreducible variety $Y$ of dimension $k$ is the least degree of a dominant rational map $Y\dashrightarrow \mathbb{P}^k$.  

\smallskip
Given a smooth hypersurface $X \subset \mathbb{P}^{n+1}$ of degree $d\geqslant 2$, a point $p\in X$, and an integer $2\leqslant h\leqslant d$, the cone $V^h_p\subset \mathbb{P}^{n+1}$ is defined by the vanishing of $h-1$ polynomials of degree $1,2,\dots,h-1$, respectively, where the linear polynomial defines the tangent hyperplane of $X$ at $p$ (cf. Section \ref{section:cones}).
When $p\in X$ is a general point, then $V^h_p$ is a complete intersection defined by those polynomials, i.e. $\dim V^h_p=n+2-h$ (cf. \cite{J1}).
However, it may happen that for some special point of $X$, the cone $V^h_p$ fails to be a complete intersection of multi--degree $(1,2,\dots,h-1)$ and its dimension is larger than expected.
We prove that when $X \subset \mathbb{P}^{n+1}$ is a general hypersurface of degree $d\geqslant 2$, this is not the case.
\begin{theorem}\label{theorem:dimV^h_p}
Let $n \geqslant 2$ be an integer and let $X\subset \mathbb{P}^{n+1}$ be a general hypersurface of degree $d\geqslant 2$. 
Then, for any point $p\in X$ and for any integer $2 \leqslant h\leqslant \min\{ n+1,d\}$, the cone $V^h_p$ has dimension 
$$\dim(V^h_p)  = n+2-h.$$
\end{theorem}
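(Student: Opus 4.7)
The plan is to set up a closed incidence variety of bad pairs $(X,p)$ and show it fails to dominate the parameter space $\mathbb{P}^N := |\mathcal{O}_{\mathbb{P}^{n+1}}(d)|$. Define
$$\mathcal{B} := \{(X,p) \in \mathbb{P}^N \times \mathbb{P}^{n+1} : p \in X,\ \dim V_p^h \geq n+3-h\};$$
by upper semicontinuity of fiber dimensions applied to the universal cone construction, $\mathcal{B}$ is closed, and the theorem is equivalent to $\dim \mathcal{B} < N$. Since $\mathrm{PGL}(n+2)$ acts transitively on $\mathbb{P}^{n+1}$ preserving $\mathcal{B}$, the fibers of $\mathcal{B} \to \mathbb{P}^{n+1}$ are isomorphic. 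Fixing $p = [1:0:\cdots:0]$ and dehomogenizing, any $X \ni p$ has equation $F = F_1 + F_2 + \cdots + F_d$ with $F_j$ a degree-$j$ form in affine coordinates $y_1,\ldots,y_{n+1}$; then $V_p^h$ is the projective cone over $V(F_1,\ldots,F_{h-1}) \subset \mathbb{P}^n$, and since $F_h,\ldots,F_d$ play no role the question reduces to showing that the codimension $c_h$ in the space of tuples $(F_1,\ldots,F_{h-1})$ of those with $\dim V(F_1,\ldots,F_{h-1}) > n+1-h$ is at least $n+1$. Indeed, then $\dim \mathcal{B} = (n+1) + (N-1-c_h) \leq N-1$.

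To prove $c_h \geq n+1$, I would stratify the bad tuples by the smallest index $i$ for which $\dim V(F_1,\ldots,F_i) > n-i$. For $i=1$ the condition is $F_1 \equiv 0$, of codimension exactly $n+1$. For $2 \leq i \leq h-1$, on that stratum $Z_{i-1} := V(F_1,\ldots,F_{i-1})$ is a complete intersection of pure dimension $m := n+1-i \geq 1$ by Cohen--Macaulayness, and the bad condition on $F_i$ is that it vanishes identically on some irreducible component $W$ of $Z_{i-1}$. The codimension of this condition is $\min_W \operatorname{rank}(\rho_W)$, where $\rho_W : H^0(\mathbb{P}^n, \mathcal{O}(i)) \to H^0(W, \mathcal{O}_W(i))$ is the restriction map. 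So the problem reduces to the lower bound $\operatorname{rank}(\rho_W) \geq n+1$ for each such component $W$.

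I would establish this bound by a general linear projection $\pi : \mathbb{P}^n \dashrightarrow \mathbb{P}^m$ from a codimension-$(m+1)$ linear subspace in sufficiently general position: for a generic center, $\pi|_W : W \to \mathbb{P}^m$ is finite and dominant, so the composition
$$H^0(\mathbb{P}^m, \mathcal{O}(i)) \xrightarrow{\pi^*} H^0(\mathbb{P}^n, \mathcal{O}(i)) \xrightarrow{\rho_W} H^0(W, \mathcal{O}_W(i))$$
is injective (a degree-$i$ form on $\mathbb{P}^m$ that vanishes on the dense image $\pi|_W(W)$ must be zero), whence $\operatorname{rank}(\rho_W) \geq \binom{m+i}{i} = \binom{n+1}{i}$. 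Since $h \leq n+1$ forces $1 \leq i \leq n$, unimodality of binomial coefficients gives $\binom{n+1}{i} \geq \binom{n+1}{1} = n+1$, and combining with the $i=1$ case gives $c_h \geq n+1$, as required. The main technical subtlety I anticipate is handling the possibly reducible $Z_{i-1}$: the bound $\binom{n+1}{i} \geq n+1$ is attained with equality exactly at $i=1$ and $i=n$, and for special tuples a component $W$ really can be a linear $\mathbb{P}^m$, so the projection argument must genuinely apply to each component individually rather than to a generic irreducible $Z_{i-1}$.
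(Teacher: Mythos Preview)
Your argument is correct and follows the same overall strategy as the paper: set up an incidence correspondence, reduce to a codimension estimate on the space of Taylor--coefficient tuples $(F_1,\ldots,F_{h-1})$, stratify the bad locus by the first index $i$ at which the sequence fails to be regular, and bound the codimension of each stratum by the Hilbert function $h_W(i)$ of an irreducible component $W$ of the partial complete intersection. The only substantive difference is in how the lower bound $h_W(i)\geqslant\binom{\dim W+i}{i}$ is proved: the paper establishes it by induction on $\dim W$ via the hyperplane--section inequality $h_{Y_\ell}(t)\geqslant h_{Y_\ell}(t-1)+h_{Y_{\ell-1}}(t)$ (their Claim~2.3), whereas you get it in one step by pulling back degree--$i$ forms along a generic finite projection $W\to\mathbb{P}^{\dim W}$. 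Your projection argument is a clean alternative and, as you observe, it also absorbs the cases $i=1,2$ (equivalently $h=2,3$) that the paper treats separately via a dedicated lemma; the paper's final binomial is $\binom{n+2}{\alpha+1}$ rather than your $\binom{n+1}{i}$ only because they work with the cone $V_p^h\subset\mathbb{P}^{n+1}$ itself rather than its base in $\mathbb{P}^n$, which shifts both the dimension and the ambient space by one.
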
 

\smallskip
In recent years there has been a great deal of interest concerning measures of irrationality of projective varieties, that is birational invariants which somehow measure the failure of a given variety to be rational (see e.g. \cite{BDELU,Ch,M,CMNP,GK,SU,V}), and several interesting results have been obtained in this direction for very general hypersurfaces of large degree (cf. \cite{BCD, BDELU, BCFS, Y}).

Given an irreducible complex projective variety $X$ of dimension $n$ and an integer $k$ such that $1\leqslant k\leqslant n$, we are interested in the following birational invariants.
According to \cite[Section 5.3]{BCFS}, we define the $k$--\emph{irrationality degree} of $X$ as the integer
\begin{equation*}
\irr_k(X):=\min\left\{c\in \mathbb{N}\left|
\begin{array}{l}
\text{Given a general point} \; p\in X, \, \exists \text{ an irreducible subvariety } Z\subseteq X \\ \text{of dimension } k \text{ such that } p\in Z \text{ and there is a dominant rational} \\ \text{map }  Z\dashrightarrow \mathbb{P}^k \text{ of degree } c
\end{array}\right.\right\}
\end{equation*}and, in line with \cite{BDELU}, we define the \emph{connecting gonality} of $X$ as the integer
\begin{equation*}
\conngon(X):=\min\left\{c\in \mathbb{N}\left|
\begin{array}{l}
\text{Given two general points }q,q'\in X,\,\exists\text{ an irreducible}\\ \text{curve } C\subset X  \text{ such that }q,q'\in C \text{ and }\gon(C)=c
\end{array}\right.\right\}.
\end{equation*}
Therefore, $\conngon(X)$ can be thought as a measure of the failure of $X$ to be rationally connected, whereas $\irr_k(X)$ measures how $X$ is far from being covered by $k$--dimensional rational varieties. 
We note further that $\irr(X):=\irr_n(X)$ is the \emph{degree of irrationality} of $X$ and $\covgon(X):=\irr_1(X)$ is the \emph{covering gonality} of $X$.
Moreover, these invariants satisfy the obvious inequalities
\begin{equation}\label{eq:inequalities}
\covgon(X)\leqslant\conngon(X)\leqslant\irr(X) \quad\text{and}\quad \irr_1(X)\leqslant\irr_2(X)\leqslant\dots\leqslant\irr_n(X).
\end{equation}

In \cite{CMNP}, it has been proved that $\irr_k(A)\geqslant k+\frac{1}{2}\left(\dim A+1\right)$, provided that $A$ is a very general abelian variety of dimension at least 3 and $1\leqslant k\leqslant \dim A$. 
Apart from this result and the cases $k\in \{1,n\}$, very little is known about the $k$--irrationality degrees and the connecting gonality of projective varieties. 
When $X\subset \mathbb{P}^{n+1}$ is a very general hypersurface of degree $d\geqslant 2n+2$, it follows from \cite[Theorem C]{BDELU} and \cite[Theorem 1.1]{BCFS} that
\begin{equation}\label{eq:irr&covgon}
\irr_n(X)=d-1 \quad \text{and} \quad d-\left\lfloor\frac{\sqrt{16n+9}-1}{2}\right\rfloor\leqslant \irr_1(X)\leqslant d-\left\lfloor\frac{\sqrt{16n+1}-1}{2}\right\rfloor,
\end{equation}
where the latter relation is often a chain of equalities.
Under the same assumption on $X\subset \mathbb{P}^{n+1}$, we are concerned with its connecting gonality and the $k$--irrationality degree, with $2\leqslant k\leqslant n-1$.

\smallskip
In this direction, we extend \cite[Proposition 2.12]{BCFS}, which relates curves of low gonality covering $X$ to the cones $V^h_p$ of lines having high contact with $X$.
In particular, we prove that if $Z\subset X$ is a $k$--dimensional subvariety passing through a general point, endowed with a dominant rational map $\varphi\colon Z\dashrightarrow \mathbb{P}^k$ of degree $c\leqslant d-3$, then $Z\subset V^{d-c}_p$ for some $p\in X$, and the map $\varphi$ is the projection from $p$ (cf. Proposition \ref{proposition:Cone}).
Combining Theorem \ref{theorem:dimV^h_p} and the latter result, we achieve bounds on the $k$--irrationality degrees of $X$.
\begin{theorem}\label{theorem:irr_k}
Let $n\geqslant 3$ and let $X\subset \mathbb{P}^{n+1}$ be a very general hypersurface of degree $d\geqslant 2n+2$.
Then
\begin{equation}\label{eq:irr_k}
\irr_k(X)\geqslant d-1-n+k \quad\text{for  }1\leqslant k\leqslant n.
\end{equation}
Moreover, equality holds for $n-2\leqslant k\leqslant n$, that is
\begin{equation*}
\irr_{n-2}(X)= d-3, \quad\irr_{n-1}(X)= d-2 \quad\text{and} \quad\irr_{n}(X)= d-1.
\end{equation*}
\end{theorem}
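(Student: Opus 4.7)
The plan is to combine Theorem \ref{theorem:dimV^h_p} with Proposition \ref{proposition:Cone} to obtain the lower bound, and to produce explicit covering families of subvarieties of $X$ realizing the upper bounds in codimensions one and two.

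\emph{Lower bound.} Fix $2 \leqslant k \leqslant n$ and set $c := \irr_k(X)$; the case $k = 1$ is already implied by \eqref{eq:irr&covgon}, since a direct check gives $\lfloor(\sqrt{16n+9}-1)/2\rfloor \leqslant n$ for all $n \geqslant 2$. If $c \geqslant d - 2$, then $c \geqslant d - 1 - n + k$ is immediate for $k \leqslant n - 1$, while for $k = n$ the equality $\irr_n(X) = d - 1$ is part of \eqref{eq:irr&covgon}. Otherwise $c \leqslant d - 3$, and Proposition \ref{proposition:Cone} produces a general $p \in X$ and an irreducible $k$-dimensional subvariety $Z \subset X$ contained in $V^{d - c}_p$, such that the chosen dominant map $Z \dashrightarrow \mathbb{P}^k$ factors through the linear projection from $p$. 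If $d - c > n + 1$, then $V^{d - c}_p \subset V^{n+1}_p$ and Theorem \ref{theorem:dimV^h_p} would give $\dim Z \leqslant \dim V^{n+1}_p = 1$, contradicting $k \geqslant 2$. Hence $d - c \leqslant n + 1$, so by Theorem \ref{theorem:dimV^h_p} we have $\dim V^{d-c}_p = n + 2 - (d - c)$. Since $V^{d-c}_p$ is a cone with vertex $p$, its image under projection from $p$ has dimension $n + 1 - (d - c)$ and must contain the image of $Z$, which dominates $\mathbb{P}^k$. Thus $k \leqslant n + 1 - (d - c)$, i.e., $c \geqslant d - 1 - n + k$.

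\emph{Upper bounds.} For $k = n$, $\irr_n(X) = d - 1$ by \eqref{eq:irr&covgon}. For $k = n - 1$, pick $p \in X$ general and set $Z := X \cap T_p X$, an irreducible hypersurface of degree $d$ in $T_p X \cong \mathbb{P}^n$ (irreducibility is standard for very general $X$ with $d \geqslant 2n + 2$). The projection from $p$ gives a dominant rational map $Z \dashrightarrow \mathbb{P}^{n - 1}$; by Theorem \ref{theorem:dimV^h_p}, $V^3_p$ is a proper subvariety of $T_p X$, so a general line through $p$ in $T_p X$ meets $X$ at $p$ with multiplicity exactly $2$ and in $d - 2$ further points, whence the map has degree $d - 2$ and $\irr_{n - 1}(X) \leqslant d - 2$. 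For $k = n - 2$, again for general $p$, the cone $V^3_p \subset T_p X$ has dimension $n - 1$ by Theorem \ref{theorem:dimV^h_p} and is defined inside $T_p X$ by the quadric $Q_p \subset \mathbb{P}^{n - 1}$ of the second fundamental form of $X$ at $p$, smooth for general $p$. Setting $Z := V^3_p \cap X$, an irreducible $(n - 2)$-dimensional subvariety of $X$ through $p$, projection from $p$ realizes $Z$ as a $(d - 3)$-sheeted cover of $Q_p$. Since $Q_p$ is a smooth quadric of positive dimension, hence rational, composing with a birational map $Q_p \dashrightarrow \mathbb{P}^{n - 2}$ produces a dominant rational map of degree $d - 3$, yielding $\irr_{n - 2}(X) \leqslant d - 3$.

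The main obstacle is the correct invocation of Proposition \ref{proposition:Cone}: one must show a priori that any dominant rational map from a $k$-dimensional subvariety $Z \subset X$ to $\mathbb{P}^k$ of degree at most $d - 3$ is forced to be a linear projection centered at a point of $X$, with $Z$ contained in the corresponding cone of high-contact lines. Once this structural statement is in hand, the inequality $\irr_k(X) \geqslant d - 1 - n + k$ follows cleanly from the dimension bounds in Theorem \ref{theorem:dimV^h_p}, and the remaining technicalities (irreducibility of $X \cap T_p X$ and $V^3_p \cap X$, smoothness of $Q_p$ for general $p$) are routine consequences of the very general hypothesis on $X$ combined with $d \geqslant 2n + 2$, via standard Bertini-monodromy techniques.
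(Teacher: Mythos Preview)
Your argument is correct and follows essentially the same approach as the paper's own proof: Proposition~\ref{proposition:Cone} together with Theorem~\ref{theorem:dimV^h_p} for the lower bound, and projections of $X\cap T_pX$ and $X\cap V^3_p$ from $p$ for the upper bounds in codimensions one and two. One minor inaccuracy (harmless here, since Theorem~\ref{theorem:dimV^h_p} holds at \emph{every} point of $X$): Proposition~\ref{proposition:Cone} only yields some $p\in Z$, not a general $p\in X$; the paper instead ensures $d-c$ lies in the range of Theorem~\ref{theorem:dimV^h_p} via $c\geqslant\irr_1(X)\geqslant d-n$ from \cite[Theorem~A]{BDELU}, whereas you reach $d-c\leqslant n+1$ by a direct dimension argument---both routes are fine.
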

We point out that the assertion for $k=n$ is given by \cite[Theorem C]{BDELU}.
Furthermore, the larger the value of $k$ is, the more significant Theorem \ref{theorem:irr_k} becomes, since for small values of $k$ the bound \eqref{eq:irr_k} is superseded by \eqref{eq:irr&covgon} and \eqref{eq:inequalities}.

\smallskip
In order to discuss the connecting gonality of $X$, we prove further that for any pair of general points $q,q'\in X$ and for any $2\leqslant h\leqslant \frac{n}{2}+1$, there exists a general point $p\in X$ such that $q,q'\in V^{h}_p$ (cf. Lemma \ref{lemma:general}). 
Then, using the fact that for $h=\left\lfloor\frac{\sqrt{8n+1}+1}{2}\right\rfloor$ the locus $V^{h}_p$ is a cone over a rationally connected variety, we bound from above the connecting gonality of a very general hypersurface $X\subset \mathbb{P}^{n+1}$ of large degree.
\begin{theorem}\label{theorem:conngon}
Let $n\geqslant 4$ and let $X\subset \mathbb{P}^{n+1}$ be a very general hypersurface of degree $d\geqslant 2n+2$.
Then
\begin{equation}\label{eq:conngon}
\conngon(X)\leqslant d-\left\lfloor\frac{\sqrt{8n+1}+1}{2}\right\rfloor.
\end{equation}
\end{theorem}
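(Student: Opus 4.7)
The plan is to apply Lemma \ref{lemma:general} to produce, for a pair of general points $q,q'\in X$, a general vertex $p\in X$ whose cone $V^h_p$ contains both, and then to carve out of such a cone a rational surface whose trace on $X$ is a curve of controlled gonality passing through $q$ and $q'$.

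Set $h:=\lfloor(\sqrt{8n+1}+1)/2\rfloor$. A direct numerical check shows that $2\leqslant h\leqslant \frac{n}{2}+1$ for $n\geqslant 4$, and since $d\geqslant 2n+2$ one has $h\leqslant \min\{n+1,d\}$; hence both Lemma \ref{lemma:general} and Theorem \ref{theorem:dimV^h_p} apply. Given $q,q'\in X$ general, Lemma \ref{lemma:general} yields a general $p\in X$ with $q,q'\in V^h_p$, and by Theorem \ref{theorem:dimV^h_p} the cone $V^h_p\subset\mathbb{P}^{n+1}$ is a complete intersection of multidegree $(1,2,\ldots,h-1)$ of the expected dimension $n+2-h$. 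Projecting from the vertex $p$, its base $B$ is a complete intersection in $\mathbb{P}^{n-1}$ of multidegree $(2,3,\ldots,h-1)$.

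The choice of $h$ forces $2+3+\cdots+(h-1)=\frac{h(h-1)}{2}-1\leqslant n-1$, so by adjunction $B$ is Fano, and hence rationally connected by the Campana/Koll\'ar--Miyaoka--Mori theorem. Let $\pi\colon V^h_p\dashrightarrow B$ denote the projection from $p$, set $b:=\pi(q)$, $b':=\pi(q')$, and let $\Gamma\subset B$ be a rational curve joining $b$ and $b'$. Then $S:=\overline{\pi^{-1}(\Gamma)}$ is a cone over $\Gamma$ with vertex $p$, hence a rational surface containing both $q$ and $q'$. By the very definition of $V^h_p$, each ruling $\ell$ of $S$ meets $X$ with multiplicity $h$ at $p$ plus $d-h$ further points, so the residual curve $C:=\overline{(S\cap X)\setminus\{p\}}$ maps onto $\Gamma\cong \mathbb{P}^1$ via $\pi$ with degree $d-h$. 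Taking the irreducible component of $C$ through $q$ and $q'$, we obtain a curve of gonality at most $d-h=d-\lfloor(\sqrt{8n+1}+1)/2\rfloor$ through these two points, which proves \eqref{eq:conngon}.

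The main delicate point I expect is ensuring that $q$ and $q'$ actually lie on the same irreducible component of $S\cap X$. Since the family of connecting rational curves $\Gamma\subset B$ is positive-dimensional, one can vary $\Gamma$ and exploit a monodromy/irreducibility argument on the resulting family of surfaces $S$ and curves $C$, together with the genericity of $X$ and $p$, to extract an irreducible component through both $q$ and $q'$.
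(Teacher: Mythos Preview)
Your approach is essentially the one the paper takes: fix $h=\left\lfloor\frac{\sqrt{8n+1}+1}{2}\right\rfloor$, use Lemma~\ref{lemma:general} to find a general $p\in X$ with $q,q'\in V^h_p$, observe that the base $\Lambda^h_p$ is Fano (hence rationally connected), join $\pi(q)$ and $\pi(q')$ by a rational curve $D\subset\Lambda^h_p$, and pull back.

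The one genuine gap is exactly the one you flag: you need $q$ and $q'$ to lie on the \emph{same} irreducible component of $C=\pi^{-1}(D)\subset X$. The paper does not resolve this by monodromy; it shows directly that $C$ is integral by the same gonality trick used in the proof of Proposition~\ref{proposition:Cone}. Namely, if $C$ split into components $C_1,C_2,\ldots$, then summing the degrees of $\pi$ restricted to each gives $\sum_i\deg(\pi|_{C_i})=d-h$, so some $C_i$ has $\gon(C_i)\leqslant\deg(\pi|_{C_i})\leqslant\frac{d-h}{2}$. Since $d\geqslant 2n+2$ and $h\geqslant 3$, one has $\frac{d-h}{2}<d-n$. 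Letting $q$, $q'$, $p$ and $D$ vary, these low-gonality components would sweep out $X$, contradicting $\covgon(X)\geqslant d-n$ from \cite[Theorem~A]{BDELU}. Hence $C$ is irreducible (and reduced by the same argument), and your curve through $q,q'$ with a degree $d-h$ map to $\mathbb{P}^1$ exists as desired.

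Your suggested monodromy route is too vague to be convincing: varying $\Gamma$ does not by itself force transitivity of the monodromy of the degree $d-h$ cover $Z_p\to\Lambda^h_p$, and ``extracting an irreducible component through both $q$ and $q'$'' is precisely what has to be proved. Replace that paragraph with the covering-gonality argument above and the proof is complete. (Two minor points: Theorem~\ref{theorem:dimV^h_p} only gives $\dim V^h_p=n+2-h$; the complete-intersection statement then follows because $V^h_p$ is cut out by exactly $h-1$ equations. And for the Koll\'ar--Miyaoka--Mori theorem you should note that, since $p$ is general, $\Lambda^h_p$ is a \emph{smooth} complete intersection by \cite[Lemma~2.2]{BCFS}.)
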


\smallskip
Finally, using our results and the Grassmannian techniques introduced in \cite{RY}, we also obtain a lower bound on the connecting gonality of very general hypersurfaces, which slightly improves the bound descending from \eqref{eq:irr&covgon} and \eqref{eq:inequalities}.
\begin{theorem}\label{theorem:conngonRY}
Let $n\geqslant 4$ and let $X\subset \mathbb{P}^{n+1}$ be a very general hypersurface of degree $d\geqslant 2n+2$.
Then
\begin{equation}\label{eq:conngon2}
\conngon(X)\geqslant d-\left\lfloor\frac{\sqrt{16n+25}-3}{2}\right\rfloor.
\end{equation}
In particular,
\begin{equation*}
\conngon(X)>\covgon(X)
\end{equation*}
$\forall n\in\mathbb{Z}_{\geqslant 4}\smallsetminus\left\{\left.4a^2+3a,4a^2+5a,4a^2+5a+1,4a^2+7a+2,4a^2+9a+4,4a^2+11a+6\right|a\in\mathbb{N}\right\}$.
\end{theorem}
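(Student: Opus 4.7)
The ``in particular'' assertion follows immediately from the main bound \eqref{eq:conngon2} combined with the upper bound $\covgon(X)\leqslant d-\bigl\lfloor\tfrac{\sqrt{16n+1}-1}{2}\bigr\rfloor$ recorded in \eqref{eq:irr&covgon}. Indeed $\conngon(X)>\covgon(X)$ holds as soon as $\bigl\lfloor\tfrac{\sqrt{16n+25}-3}{2}\bigr\rfloor<\bigl\lfloor\tfrac{\sqrt{16n+1}-1}{2}\bigr\rfloor$, and a routine case analysis on the two floor functions identifies the listed values $n=4a^2+3a,\,4a^2+5a,\,4a^2+5a+1,\,4a^2+7a+2,\,4a^2+9a+4,\,4a^2+11a+6$ (with $a\in\mathbb{N}$) as precisely those where the two floors coincide. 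I therefore focus on proving \eqref{eq:conngon2}.

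Set $c:=\conngon(X)$ and $a:=d-c$, so that \eqref{eq:conngon2} is equivalent to $a(a+3)\leqslant 4(n+1)$; I argue by contradiction, assuming that $a(a+3)>4(n+1)$. For $n\geqslant 4$, this forces $a\geqslant 4\geqslant 3$, so in particular $c\leqslant d-3$ and Proposition \ref{proposition:Cone} applies. By the very definition of $\conngon(X)$ there is an irreducible family $\pi\colon\mathcal{C}\to B$ of curves $C_b\subset X$ of gonality $c$, equipped with two generically defined sections $\sigma_1,\sigma_2\colon B\dashrightarrow\mathcal{C}$ such that the associated rational map $B\dashrightarrow X\times X$, $b\mapsto(\sigma_1(b),\sigma_2(b))$, is dominant; in particular $\dim B\geqslant 2n$. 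Proposition \ref{proposition:Cone} yields, for general $b\in B$, a point $p_b\in X$ such that $C_b\subset V^a_{p_b}$ and the $g^1_c$ computing $\gon(C_b)$ is cut out by the lines of the cone through the vertex $p_b$. Consequently the two lines $\ell_{b,i}:=\overline{p_b\,\sigma_i(b)}\subset V^a_{p_b}$, $i=1,2$, meet $X$ at $p_b$ with multiplicity at least $a$, and $\ell_{b,i}\cdot X=a\cdot p_b+D_{b,i}$, where $D_{b,i}$ is a gonal fibre of $C_b$ through $\sigma_i(b)$.

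The plan is now to implement the Grassmannian-theoretic argument of \cite{RY}, in the same spirit as the proof of the covering gonality relation $a(a+1)\leqslant 4n+2$ given in \cite{BCFS}, but with the second marked line forcing a stronger inequality. Let $G:=\mathbb{G}(1,n+1)$ and consider the incidence locus
\[
\Sigma^a:=\bigl\{(p,\ell)\in X\times G\,:\,\mult_p(\ell\cdot X)\geqslant a\bigr\},
\]which, thanks to Theorem \ref{theorem:dimV^h_p}, has dimension exactly $2n+1-a$ at every point. The configuration above produces a rational map $B\dashrightarrow\Sigma^a\times_X\Sigma^a$, $b\mapsto\bigl((p_b,\ell_{b,1}),(p_b,\ell_{b,2})\bigr)$, whose composition with the natural two-point evaluation $\Sigma^a\times_X\Sigma^a\dashrightarrow X\times X$ is dominant by construction. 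Converting this dominance into the quadratic inequality $a(a+3)\leqslant 4(n+1)$ is the technical heart of the proof and, I expect, its main obstacle: a Chow-theoretic computation on $G$ in the style of \cite{RY} must be carried out, in which each of the two lines sharing the $a$-fold vertex $p_b$ contributes an independent Schubert condition, and the positivity of $\mathcal{O}_{\mathbb{P}^{n+1}}(1)|_X$ provides the cohomological bound. Heuristically, the shift from $a(a+1)$ to $a(a+3)$ relative to the covering gonality case encodes precisely the extra contribution of the second marked line attached to the same vertex.
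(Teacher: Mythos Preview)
Your setup through the construction of the two lines $\ell_{b,1},\ell_{b,2}\subset V^a_{p_b}$ is correct, as is the reformulation of \eqref{eq:conngon2} as $a(a+3)\leqslant 4(n+1)$ and the handling of the ``in particular'' clause. But the proof stops exactly where the content begins: you explicitly leave the passage from the dominance of $B\dashrightarrow X\times X$ to the inequality $a(a+3)\leqslant 4(n+1)$ as a ``Chow-theoretic computation on $G$'' with ``Schubert conditions'' and ``positivity of $\mathcal{O}_{\mathbb{P}^{n+1}}(1)|_X$''. No such computation is carried out, and this description does not match the mechanism that actually produces the bound.

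The paper's argument is not intersection theory on $\mathbb{G}(1,n+1)$. The missing geometric input is that the gonal map on $C_b$, being projection from $p_b$, has as image a \emph{rational} curve in $\Lambda^a_{p_b}$ joining the two marked points; and under the contradiction hypothesis (which forces $a>\bigl\lfloor\tfrac{1+\sqrt{8n+1}}{2}\bigr\rfloor$) the complete intersection $\Lambda^a_{p_b}$ has effective canonical bundle, hence is not covered by rational curves. Thus the locus $\mathcal{R}_{n+1}\subset\Box_a$ of tuples $(p,[\ell_1],[\ell_2],F)$ admitting such a rational curve is a proper closed subset, and the connecting hypothesis gives $\codim_{\Box_a}\mathcal{R}_{n+1}\leqslant n+2-2a$. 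The quadratic bound then comes from the Riedl--Yang ``spreading out'' argument of \cite[Proposition~3.5]{RY}: one realises $X$ and a very general $Y'\subset\mathbb{P}^{m+2}$ with $m=\tfrac{a(a-1)}{2}-2$ (so that $\Lambda^a_{p',Y'}$ is Calabi--Yau) as linear sections of a common hypersurface in some $\mathbb{P}^M$, and shows that the codimension $\varepsilon_r$ of the corresponding bad locus satisfies $\varepsilon_{m+2}\geqslant 1$ and increases by at least one each time $r$ drops by one, giving $\varepsilon_{n+1}\geqslant m-n+2$. Comparing with the upper bound yields $\tfrac{a(a-1)}{2}-n\leqslant n+2-2a$, i.e.\ $a(a+3)\leqslant 4(n+1)$. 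None of this is a Schubert computation, and the positivity of $\mathcal{O}(1)|_X$ plays no role; the key steps you are missing are the non-uniruledness of $\Lambda^a_p$ and the recursive codimension estimate coming from variation of the ambient linear section.
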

In Example \ref{example:small n} we also discuss the cases $1\leqslant n \leqslant 3$, which turn out to satisfy equality in \eqref{eq:conngon2}. 
We note that the second part of the statement of Theorem \ref{theorem:conngonRY} is obtained from \eqref{eq:irr&covgon} by determining the values of $n$ such that $\left\lfloor\frac{\sqrt{16n+1}-1}{2}\right\rfloor\neq \left\lfloor\frac{\sqrt{16n+25}-3}{2}\right\rfloor$.
We believe that the bound \eqref{eq:conngon2} is far from being sharp.
However, for any $4\leqslant n\leqslant 16$ with $n\neq 9,13,14$, the right--hand sides of \eqref{eq:conngon} and \eqref{eq:conngon2} do coincide, hence Theorems \ref{theorem:conngon} and \ref{theorem:conngonRY} compute the connecting gonality of $X$ in these cases (cf. Example \ref{example:small n}).

\smallskip
The paper is organized as follows.
In Section \ref{section:cones}, we recall some basic facts on the cones of lines of high contact with a general hypersurface $X\subset \mathbb{P}^{n+1}$ and we prove Theorem \ref{theorem:dimV^h_p}.

In Section \ref{section:polar}, we consider polar hypersurfaces of a smooth hypersurface $X\subset \mathbb{P}^{n+1}$, in order to discuss when any pair $q,q'\in X$ lies on a cone $V^h_p$ for some $p\in X$.

Finally, Section \ref{section:bounds} is concerned with the applications to measures of irrationality. 
In particular, after describing the relation between the cones $V^h_p$ and the degree of irrationality of $k$--dimensional subvarieties of $X$ passing through a general point, we prove Theorems \ref{theorem:irr_k}, \ref{theorem:conngon} and \ref{theorem:conngonRY}, and we discuss the behavior of the connecting gonality for small values of $n=\dim X$.

\subsection*{Notation}

We work throughout over the field $\mathbb{C}$ of complex numbers.
By \emph{variety} we mean a complete reduced algebraic variety $X$, and by \emph{curve} we mean a variety of dimension 1.
We say that a property holds for a \emph{general} (resp. \emph{very general}) point ${x\in X}$ if it holds on a Zariski open nonempty subset of $X$ (resp. on the complement of the countable union of proper subvarieties of $X$).

\section{Dimension of cones of lines having high contact}\label{section:cones}

Let $n \geqslant 2$ be an integer, and let $X=V(F)\subset \mathbb{P}^{n+1}$ be a smooth hypersurface defined by the vanishing of a homogeneous polynomial $F\in \mathbb{C}[x_0,\dots,x_{n+1}]$ of degree $d\geqslant 2$.
Given a point $p\in X$ and an integer $2\leqslant h\leqslant d$, we define the \emph{cone} $V^h_p=V^h_{p,X}\subset  \mathbb{P}^{n+1}$ \emph{of tangent lines of order} $h$ \emph{at} $p$ as the Zariski closure of the locus swept out by lines $\ell\subset \mathbb{P}^{n+1}$ such that either $\ell\subset X$ or $\ell\cdot X\geqslant hp$. 
Therefore $V^h_p$ is a cone with vertex at $p$, defined by the vanishing of the following $h-1$ polynomials occurring in the Taylor expansion of $F$ at $p$
\begin{equation}\label{eq:Gk}
\begin{aligned}
G_k(x_0,\ldots,x_{n+1}) & :=\left(x_0\frac{\partial}{\partial x_0}+\cdots+x_{n+1}\frac{\partial}{\partial x_{n+1}}\right)^{(k)}F(p)\\
& =\sum_{i_0+\cdots+i_{n+1}=k} \frac {k!}{i_0!\cdots i_{n+1}!} x_0^{i_0}\cdots 
x_{n+1}^{i_{n+1}}\frac {\partial ^kF}{\partial x_0^{i_0}\cdots \partial 
x_{n+1}^{i_{n+1}}}(p)
\end{aligned}
\end{equation}
where $(-)^{(k)}$ denotes the usual symbolic power, $1\leqslant k\leqslant h-1$, and $\deg G_k=k$  (cf. \cite[p.\,186]{Ci}).
In particular, the cone $V^2_p$ coincides with the (projective) tangent hyperplane $T_pX\subset \mathbb{P}^{n+1}$ to $X$ at $p$. 
When instead $h\geqslant 3$, we denote by $\Lambda_p^{h}=\Lambda^{h}_{p,X}$ the intersection of $V_p^{h}$ with a general hyperplane $H\subset \mathbb{P}^{n+1}$ not containing $p$, so that $\Lambda_p^{h}$ is defined in $H\cap T_pX\cong \mathbb{P}^{n-1}$ by $h-2$ polynomial equations of degree $2,3,\dots,h-1$, respectively, and $V_p^{h}$ is the cone over $\Lambda_p^{h}$ with vertex at $p$.

For any $2\leqslant h\leqslant \min\{n+1,d\}$, it follows from this description of $V_p^{h}$ that
\begin{equation}\label{eq:expdim}
\dim V^h_p\geqslant  n+2-h.
\end{equation}
When $X$ is a general hypersurface of degree $d\geqslant 2$ and $p\in X$ is a general point, \cite[Lemma 2.2]{BCFS} guarantees that $V^h_p\subset \mathbb{P}^{n+1}$ is a general complete intersection of multi--degree $(1,2,\dots,h-1)$ and, in particular, \eqref{eq:expdim} is an equality.

According to the assertion of Theorem \ref{theorem:dimV^h_p}, we want to prove that equality in \eqref{eq:expdim} holds for \emph{any} point $p\in X$. 
The case $h=2$ is trivial since $X$ is smooth and hence $V^2_p=T_pX\cong \mathbb{P}^n$.
The assertion for $h=3$ is implied by the following result.

\begin{lemma}\label{lemma:3ple} Let $n \geqslant 2$ be an integer and let $X\subset \mathbb{P}^{n+1}$ be a general hypersurface of degree $d\geqslant 2$. 
Then, for any point $p\in X$, the intersection of $X$ with the tangent hyperplane $T_{p}X$ has multiplicity 2 at $p$.
\end{lemma}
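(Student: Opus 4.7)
The plan is a standard incidence–and–parameter count: I will show that the locus of pairs $(X,p)$ for which $\mult_p(X\cap T_pX)\geqslant 3$ sweeps out a proper subvariety of $|\mathcal{O}_{\mathbb{P}^{n+1}}(d)|$ under the forgetful projection. Since $T_pX$ is tangent to $X$ at $p$, the intersection multiplicity is always at least $2$, so this suffices to conclude that the multiplicity is exactly $2$ for every $p$ on a general $X$.

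To set up the count, fix $p\in\mathbb{P}^{n+1}$ and choose coordinates so that $p=[1:0:\cdots:0]$. Writing $F=\sum_{k=0}^d F_k(x_1,\ldots,x_{n+1})\,x_0^{d-k}$ with $F_k$ homogeneous of degree $k$, the conditions ``$p\in X$ and $X$ smooth at $p$'' read $F_0=0$ and $F_1\neq 0$, the tangent hyperplane is $T_pX=V(F_1)$, and $X\cap T_pX$ is defined on $T_pX$ by $F_2|_{V(F_1)}+F_3|_{V(F_1)}+\cdots$. The multiplicity at $p$ therefore exceeds $2$ precisely when $F_2|_{V(F_1)}\equiv 0$, equivalently when $F_2$ is divisible by the linear form $F_1$, i.e.\ $F_2=F_1\cdot L$ for some linear form $L$.

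Accordingly, consider the incidence
\begin{equation*}
I=\bigl\{(p,F)\in\mathbb{P}^{n+1}\times|\mathcal{O}_{\mathbb{P}^{n+1}}(d)|\;:\;F(p)=0,\ \mult_p\bigl(V(F)\cap T_pV(F)\bigr)\geqslant 3\bigr\}.
\end{equation*}
Over a fixed $p$, the pairs $(F_1,F_2)$ with $F_2=F_1\cdot L$ form a locus of dimension $(n+1)+(n+1)$ inside the $(n+1)+\binom{n+2}{2}$–dimensional space of such pairs, i.e.\ a codimension $\binom{n+1}{2}$ condition; together with $F_0=0$ the fiber of $I\to\mathbb{P}^{n+1}$ over $p$ has codimension at least $1+\binom{n+1}{2}$ in $|\mathcal{O}(d)|$. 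Letting $p$ vary yields
\begin{equation*}
\dim I\;\leqslant\;\dim|\mathcal{O}(d)|+(n+1)-1-\binom{n+1}{2}\;=\;\dim|\mathcal{O}(d)|+n-\binom{n+1}{2}.
\end{equation*}
Since $\binom{n+1}{2}>n$ for every $n\geqslant 2$, the projection $I\to|\mathcal{O}(d)|$ cannot be dominant, so a general $X$ avoids the image, as required.

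I do not anticipate a serious obstacle: the whole argument reduces to the elementary inequality $\binom{n+1}{2}>n$, which holds precisely under the hypothesis $n\geqslant 2$ of the lemma. The one point worth some care is confirming that the bad locus really is closed in $|\mathcal{O}(d)|$; this follows by realising $I$ as the image of the closed subvariety of triples $(p,F,L)$ satisfying $F_0=0$ and $F_2=F_1\cdot L$, which is cut out by bilinear equations in the coefficients.
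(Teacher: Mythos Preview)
Your argument is correct and is essentially the same parameter count as the paper's own proof. The only difference is organizational: the paper introduces an auxiliary hyperplane $\Pi$ and a hypersurface $Y\subset\Pi$ with a triple point, then counts hypersurfaces $X$ restricting to $Y$ along $\Pi$, arriving at the inequality $\binom{n+2}{2}>2n+1$; you instead fix $\Pi=T_pX$ from the start and read off the divisibility condition $F_1\mid F_2$ directly, arriving at the equivalent inequality $\binom{n+1}{2}>n$. The two inequalities differ by $n+1$ on each side and are the same statement, so the approaches coincide in substance.
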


\begin{proof} Let $\mathcal L$ be the linear system of all hypersurfaces of degree $d$ in $\mathbb P^{n+1}$, which has dimension
\[
\dim\mathcal L={{d+n+1}\choose {n+1}}-1.
\] 
Consider the variety $\mathcal{V}$ consisting of all triples $(p,\Pi,Y)$ where 
$\Pi\subset \mathbb{P}^{n+1}$ is a hyperplane, $p\in \Pi$ and $Y\subset \Pi$ is a hypersurface of degree $d$ with a point of multiplicity at least 3 at $p$. 
Then we define the variety
\[
\mathcal{Z}:=\left\{ \left.(p,\Pi,Y,X) \in \mathcal{V}\times \mathcal L\right| Y\subset X \right\}.
\]
endowed with the projection $\pi_1\colon \mathcal{Z}\longrightarrow \mathcal{V}$, whose fibers are all isomorphic to linear systems of hypersurfaces of degree $d$ of the same dimension 
$
{{d+n}\choose {n+1}}.
$
Looking at the map $\mathcal{V}\longrightarrow \mathbb P^{n+1}\times \left(\mathbb{P}^{n+1}\right)^*$ given by $(p,\Pi,Y)\longmapsto (p,\Pi)$, it is easy to see that $\mathcal{V}$ is irreducible of dimension
\[
\dim\mathcal{V}=2n+ {{d+n}\choose {n}}-{{n+2}\choose {2}}.
\]
Hence $\mathcal{Z}$ is also irreducible, having dimension
\[
\dim\mathcal{Z}=2n+ {{d+n}\choose {n}}-{{n+2}\choose {2}}+{{d+n}\choose {n+1}}=2n+ {{d+n+1}\choose {n+1}}-{{n+2}\choose {2}}.
\]
Consider now the projection $\pi_2\colon \mathcal{Z}\longrightarrow \mathcal L$, whose image is the locus $\mathcal{T}$ of all hypersurfaces $X\subset \mathbb{P}^{n+1}$ of degree $d$ having a point $p\in X$ and a hyperplane $\Pi\subset \mathbb{P}^{n+1}$ such that the intersection scheme $X\cap \Pi$ has a point of multiplicity at least 3 at $p$. 
Hence
\[
\dim \mathcal{T}\leqslant \dim \mathcal{Z} = 
\dim \mathcal L+2n+1-{{n+2}\choose {2}}
\]
and, since ${{n+2}\choose {2}}>2n+1$ as soon as $n\geqslant 2$, we conclude that $\mathcal{T}$ is a proper closed subset of $\mathcal L$, as wanted. 
\end{proof}

We notice that the double point at $p\in X$ of the intersection of $X$ with $T_pX$ as in Lemma \ref{lemma:3ple} does not need to be an ordinary double point, and the locus where the singularity is worse than an ordinary double point is the intersection of $X$ with its Hessian hypersurface.

\smallskip
Now we prove Theorem \ref{theorem:dimV^h_p}.

\begin{proof}[Proof of Theorem \ref{theorem:dimV^h_p}]

We already discussed the trivial case $h=2$. 
Moreover, Lemma \ref{lemma:3ple} ensures that for any $p\in X$, a general line $\ell\subset T_pX$ tangent to $X$ at $p$ intersects $X$ at $p$ with multiplicity exactly 2.
Hence $V^3_p$ is a proper subvariety of $T_pX$, and \eqref{eq:expdim} is an equality.
Thus we assume hereafter $h\geqslant 4$.

Let $[x_0:\ldots: x_{n+1}]$ be homogeneous coordinates in ${\mathbb P}^{n+1}$ and, for any positive integer $k$, we set
\begin{equation}
\label{eq:S_k}
S_k := \mathbb{C} [x_0, \ldots, x_{n+1}]_k\quad \text{and} \quad S_k^* := \mathbb{C} [x_0, \ldots, x_{n+1}]_k \setminus \{0\}.
\end{equation}
For any $F \in S_d^*$, we denote by $V(F) \subset \mathbb{P}^{n+1}$ the hypersurface defined by the vanishing of $F$, and for any $\mathbf{G}:= \left(G_1, \ldots, G_{h-1}\right) \in \prod_{k=1}^{h-1} S_k$, we denote by $V(\mathbf{G})$ the intersection scheme of the hypersurfaces $V(G_k)$ for  $1\leqslant k\leqslant h-1$.

For $F \in S_d^*$ and $p \in V(F)\subset \mathbb{P}^{n+1}$, let $G_k=G_{p,k}(F)(x_0,\ldots,x_{n+1})$ be the homogeneous polynomial of degree $k$ defined in \eqref{eq:Gk}, where $1\leqslant k\leqslant h-1$, and let 
$$
\mathbf{G}=\mathbf{G}_p(F) := \left(G_{p,1}(F),\dots,G_{p,h-1}(F)\right)\in \prod_{k=1}^{h-1} S_k.
$$
Therefore $V_p^h$ is the cone $V(\mathbf{G}_p(F))$ with vertex at $p$, and \eqref{eq:expdim} fails to be an equality if and only if $\mathbf{G}_p(F)$ is not a \emph{regular sequence}. 
In order to prove the assertion, we show that if $F\in S_d^*$ is general, then the sequence $\mathbf{G}_p(F)$ is regular for all points $p\in V(F)$. 

To this aim, let $U_d \subset S_d^*$ be the open dense subset parametrizing those $F \in S_d^*$ such that $V(F)$ is smooth and let
$$
\mathcal{J}:=\left\{ \left.\left(p , F, {\mathbf{G}}\right) \in \mathbb{P}^{n+1} \times U_d \times \prod_{k=1}^{h-1} S_k \right| p\in V(F)\text{ and }\mathbf{G} =\mathbf{G}_p(F) \right\},
$$
which is endowed with the two projections $\pi_1\colon \mathcal{J}\longrightarrow U_d$ and $\pi_2\colon \mathcal{J}\longrightarrow \mathbb{P}^{n+1} \times \prod_{k=1}^{h-1} S_k$.
The map $\pi_1$ is surjective, and for any $F\in U_d$, the fiber $\pi_1^{-1}(F)$ is isomorphic to $V(F)$, which is irreducible of dimension $n$. 
Thus $\mathcal{J}$ is irreducible of dimension
\begin{equation}\label{eq:dimJ}
\dim  \mathcal{J} =  \dim (U_d) + n = {n+1+d\choose d} +n.
\end{equation}
Let us define $\mathcal{W} := \pi_2(\mathcal{J}) \subset \mathbb{P}^{n+1} \times \prod_{k=1}^{h-1} S_k$, which is irreducible too. 
\begin{claim}\label{claim:codimW1} All fibers of $\pi_2\colon \mathcal{J}\longrightarrow \mathcal{W}$ have dimension 
\[
f={{n+h}\choose h}+{{n+h+1}\choose {h+1}}+\cdots+{{n+d}\choose d}={{n+d+1}\choose {d}}-{{n+h}\choose {h-1}}.
\]
\end{claim}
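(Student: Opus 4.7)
My plan is to realize each fiber of $\pi_2$ as a non-empty Zariski-open subset of an affine subspace of $S_d$, and then to compute the common codimension of these affine subspaces by reformulating the homogeneous polars $G_{p,k}(F)$ in terms of affine Taylor coefficients of $F$ around $p$. For a fixed $(p,\mathbf{G})\in\mathcal{W}$ with $\mathbf{G}=(G_1,\ldots,G_{h-1})$, I would introduce the linear map
\begin{equation*}
L_p\colon S_d\longrightarrow \mathbb{C}\oplus \bigoplus_{k=1}^{h-1}S_k,\qquad F\longmapsto \bigl(F(p),\,G_{p,1}(F),\,\ldots,\,G_{p,h-1}(F)\bigr),
\end{equation*}
so that $\pi_2^{-1}(p,\mathbf{G})=U_d\cap L_p^{-1}(0,G_1,\ldots,G_{h-1})$ is a non-empty (by definition of $\mathcal{W}$) Zariski-open subset of an affine subspace of $S_d$, giving $\dim\pi_2^{-1}(p,\mathbf{G})=\dim\ker L_p$. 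The whole setup is $\mathrm{PGL}_{n+2}$-equivariant, so $\dim\ker L_p$ depends only on the orbit of $p$, and transitivity lets me assume $p=[1:0:\cdots:0]$.

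Next I would dehomogenize: for $F\in S_d$ set $\widetilde{F}(y):=F(1,y_1,\ldots,y_{n+1})\in\mathbb{C}[y]_{\leqslant d}$, and decompose $\widetilde{F}=\sum_{k=0}^{d}T_k$ into its degree-$k$ homogeneous pieces $T_k\in\mathbb{C}[y_1,\ldots,y_{n+1}]_k$. Homogeneity of $F$ then yields
\begin{equation*}
F(p+tx)=(1+tx_0)^d\,\widetilde{F}\!\left(\tfrac{tx_1}{1+tx_0},\ldots,\tfrac{tx_{n+1}}{1+tx_0}\right)=\sum_{m=0}^d t^m(1+tx_0)^{d-m}T_m(x_1,\ldots,x_{n+1}),
\end{equation*}
and matching this against the defining expansion $F(p+tx)=\sum_{k\geqslant 0}\tfrac{t^k}{k!}G_{p,k}(F)(x)$ produces, after reading off the coefficient of $t^m$, the triangular identity $G_{p,m}(F)(x)=m!\sum_{k=0}^{m}\binom{d-k}{m-k}x_0^{m-k}T_k(x_1,\ldots,x_{n+1})$ together with $F(p)=T_0$. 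Hence $L_p(F)$ is equivalent, via an invertible linear change, to the $(h-1)$-jet $(T_0,T_1,\ldots,T_{h-1})$ of $\widetilde{F}$ at the origin.

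Finally, $\ker L_p$ is precisely the subspace of $F\in S_d$ for which $\widetilde{F}$ vanishes to order $\geqslant h$ at $0$, i.e.\ $\widetilde{F}\in\bigoplus_{k=h}^d\mathbb{C}[y_1,\ldots,y_{n+1}]_k$, and dehomogenization is a bijection between these two spaces. Therefore
\begin{equation*}
\dim\ker L_p=\sum_{k=h}^{d}\binom{n+k}{k}=f,
\end{equation*}
by the hockey-stick identity, and this value is independent of $p$. The main subtlety I expect to flag is that the naive count $1+\sum_{k=1}^{h-1}\dim S_k=\binom{n+h+1}{h-1}$ overestimates $\mathrm{rk}(L_p)=\binom{n+h}{h-1}$; the excess is precisely accounted for by the Euler-type relations among the partials of a homogeneous polynomial, and these are absorbed cleanly by passing to the affine Taylor expansion.
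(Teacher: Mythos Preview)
Your proof is correct and follows essentially the same approach as the paper: after normalizing $p=[1:0:\cdots:0]$, both arguments dehomogenize $F$ with respect to $x_0$ and show that $(F(p),G_1,\ldots,G_{h-1})$ is related by an invertible triangular transformation to the low-degree graded pieces $T_0,\ldots,T_{h-1}$ (the paper writes these as $c,F_2,\ldots,F_{h-1}$ after additionally normalizing the tangent hyperplane), so the fiber consists of polynomials with these pieces fixed and $T_h,\ldots,T_d$ arbitrary. Your packaging via the linear map $L_p$ and the observation about Euler relations explaining its rank deficiency are a clean touch, but the underlying computation is the same.
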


\begin{proof} [Proof of Claim \ref {claim:codimW1}] Let $(p,\mathbf{G})=\pi_2(p,F, \mathbf{G})\in \mathcal{W}$, with $(p,F, \mathbf{G})\in \mathcal{J}$, $\mathbf{G}=(G_1,\dots, G_{h-1})$ and $G_k=G_{p,k}(F)$ for $1\leqslant k\leqslant h-1$. 
Up to projective transformations, we may assume $p=[1:0: \ldots:0]$ and $T_pV(F) = V(x_{n+1})$. 
Then $F$ is of the form
\begin{equation}\label{eq:F}
F(x_0,\ldots, x_{n+1})=cx_{n+1}x_0^{d-1}+F_2(x_1,\ldots,x_{n+1})x_0^{d-2}+\dots +F_d(x_1,\ldots, x_{n+1}),
\end{equation}
where $c$ is a non--zero constant and each $F_i\in \mathbb{C}[x_1,\dots,x_{n+1}]$ is homogeneous of degree $i$. 
Easy computations show that
\begin{equation}\label{eq:form}
\begin{aligned}
G_{1}=& cx_{n+1}\\
G_{2}=& 2(d-1)cx_{n+1}x_0+ 2F_2 \\
G_{3}=& 3(d-1)(d-2)cx_{n+1}x^2_0+ 6(d-2)x_0F_2+6F_3 \\
\dots\\
G_{h-1}=& (h-1)\frac{(d-1)!}{(d-h+1)!}cx_{n+1}x_0^{h-2}+\sum_{i=2}^{h-1}\frac{(h-1)!}{(h-i-1)!}\frac{(d-i)!}{(d-h+1)!}x_0^{h-i-1}F_i.
\end{aligned}
\end{equation}
To determine the fiber of $\pi_2$ over $(p, \mathbf{G})$, we have to find all forms $F'\in U_d$ such that $(p,F', \mathbf{G})\in \mathcal{J}$. 
As in \eqref {eq:F}, we have $F'=c'x_{n+1}x_0^{d-1}+F'_2x_0^{d-2}+\dots +F'_d$, which satisfies the corresponding equations in \eqref{eq:form}.
Therefore, Equations \eqref{eq:form} imply $c=c'$ and $F_k'=F_k$ for any $2\leqslant k\leqslant h-1$.
Thus $F'$ may differ from $F$ by the terms $F'_i$ for $h\leqslant i\leqslant d$, which can be chosen arbitrarily in $\mathbb{C}[x_1,\dots,x_{n+1}]_i$.
\end{proof}

We deduce from the claim and \eqref{eq:dimJ} that
$$
\dim \mathcal{W}=\dim \mathcal{J} - f={{n+h}\choose {h-1}}+n.
$$
Moreover, according to the description of \eqref{eq:form}, this equality implies that $\mathcal W$ coincides with the set of all $h$--tuples $(p, G_1,...,G_{h-1})$ where $p\in \mathbb{P}^{n+1}$ is arbitrary, $G_1$ is an arbitrary homogeneous polynomial of degree 1 vanishing at $p$, and for any $k=2,\dots,h-1$, $G_k$ is a homogeneous polynomial of degree $k$ such that modulo $G_1,\dots,G_{k-1}$ is arbitrary.

Then we set 
$$
\mathcal{W}_0 := \left\{ \left.(p, {\mathbf{G}}) \in \mathcal{W}  \right| \mathbf{G} \text{ is not a regular sequence}\right\} \subset \mathcal{W},
$$
so that proving the assertion is equivalent to showing that $\pi_1\left(\pi_2^{-1} (\mathcal{W}_0)\right)$ is a proper closed subset of $U_d$.
In particular, it suffices to prove that
\begin{equation}\label{eq:codimW0}
\codim_\mathcal{W} \mathcal{W}_0 > n,
\end{equation} 
because in this case Claim \ref{claim:codimW1} and \eqref{eq:dimJ} yield
\begin{align*}
\dim  \pi_1\left(\pi_2^{-1} (\mathcal{W}_0)\right) &\leqslant \dim \pi_2^{-1} (\mathcal{W}_0) = \dim \mathcal{W}_0 + f = \dim\mathcal{W}_0 + \dim \mathcal{J} - \dim \mathcal{W} =\\
&=\dim \mathcal{J} - \codim_\mathcal{W} \mathcal{W}_0 < \dim \mathcal{J} - n = \dim U_d,
\end{align*}
so that the assertion holds. Hence we focus on proving \eqref {eq:codimW0}.

To this aim, let $\mathcal{Z}$ be an irreducible component of $\mathcal{W}_0$ and let $(p, {\mathbf{G}})\in \mathcal{Z}$ be a general point, with $\mathbf{G}=(G_1,\dots, G_{h-1})$.
Then $\mathbf{G}$ is not a regular sequence, and we may define $\alpha=\alpha_\mathcal{Z}$ to be the greatest integer such that $(G_1,\dots, G_{\alpha})$ is a regular sequence. 
Since we already showed that $V^3_p=V(G_1,G_2)$ has dimension $n-1$, we have $2\leqslant \alpha< h-1$.
By maximality of $\alpha$, there exists an irreducible component  $Y$ of  $V(G_1,G_2,\dots,G_{\alpha})$ of dimension $m: = \dim Y = n -\alpha+1$ and contained in $V(G_{\alpha+1})$. Therefore, in view of the above interpretation of $\mathcal{W}$ in terms of $h$--tuples $(p, G_1,...,G_{h-1})$, in order to prove \eqref{eq:codimW0}, it is enough to show that the \emph{Hilbert function} $h_Y\colon \mathbb{N}\longrightarrow \mathbb{N}$ of $Y$ satisfies $h_{Y}(\alpha+1)>n$.

For any $\ell=0,\dots, m$, let $Y_{\ell}$ denote a general linear section of $Y$ of dimension $\ell$. 
In particular, $Y_m=Y$ and $Y_{m-1}$ is a general hyperplane section of $Y$. 
Then, for any positive integer $t$, we have
\begin{equation*}
h_Y(t) - h_Y(t-1) \geqslant h_{Y_{m-1}}(t)
\end{equation*}
(see \cite[Lemma 3.1]{H}). 
Analogously, for any $1\leqslant \ell\leqslant m$ and for any integer $t>0$, we obtain
\begin{equation}\label{eq:Ciro*}
h_{Y_{\ell}} (t) \geqslant h_{Y_{\ell}}(t-1) + h_{Y_{\ell-1}}(t),
\end{equation}
and we deduce by iteration that for any integer $t>0$,
\begin{equation}\label{eq:Ciro**}
h_Y(t) \geqslant \sum_{\ell =1}^m h_{Y_{\ell}} (t-1) + h_{Y_0}(t) \geqslant  \sum_{\ell =1}^m h_{Y_{\ell}} (t-1) + 1.
\end{equation}

\begin{claim}\label{claim:h_Y}
For any $1 \leqslant \ell \leqslant m$ and for any integer $t>0$,
$$
h_{Y_{\ell}} (t) \geqslant {\ell +t \choose t}.
$$
\end{claim}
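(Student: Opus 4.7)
The plan is to prove the Claim by induction on $\ell$, using the recursion \eqref{eq:Ciro*} iteratively together with the hockey stick identity for binomial coefficients.

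As a preliminary, I record that $h_{Y_{\ell}}(s) \geqslant 1$ for every $0 \leqslant \ell \leqslant m$ and every $s \geqslant 0$, since $Y_{\ell}$ is a nonempty projective scheme (note that $m \geqslant 2$ in the present setup, because $2 \leqslant \alpha \leqslant h-2$ and $h \leqslant n+1$ force $m = n - \alpha + 1 \geqslant 2$) and the class of $1$ is a nonzero element of its homogeneous coordinate ring in every degree. Iterating \eqref{eq:Ciro*} down to degree $0$ yields
\[
h_{Y_{\ell}}(t) \;\geqslant\; h_{Y_{\ell}}(0) + \sum_{s=1}^{t} h_{Y_{\ell-1}}(s) \;=\; 1 + \sum_{s=1}^{t} h_{Y_{\ell-1}}(s)
\]
for all $1 \leqslant \ell \leqslant m$ and $t > 0$.

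For the base case $\ell = 1$, the trivial bound $h_{Y_0}(s) \geqslant 1$ plugged into the displayed inequality immediately gives $h_{Y_1}(t) \geqslant 1 + t = \binom{t+1}{t}$. For the inductive step, combining the inductive hypothesis $h_{Y_{\ell-1}}(s) \geqslant \binom{\ell-1+s}{s}$ with the hockey stick identity $\sum_{s=0}^{t} \binom{\ell-1+s}{s} = \binom{\ell+t}{t}$ produces the desired lower bound on $h_{Y_{\ell}}(t)$. The argument is essentially combinatorial once \eqref{eq:Ciro*} is in hand, so there is no substantive obstacle here: the content lies entirely in unfolding the recursion and invoking the binomial identity, with the only bookkeeping point being the use of $\binom{\ell-1}{0} = 1$ to absorb the initial $1$ into the sum.
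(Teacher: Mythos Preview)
Your proof is correct. Both your argument and the paper's unfold the same recursion \eqref{eq:Ciro*} and close with the hockey stick identity, but you induct on $\ell$ (iterating \eqref{eq:Ciro*} in the degree variable $t$) whereas the paper inducts on $t$ (iterating \eqref{eq:Ciro*} in the dimension variable $\ell$, which is exactly \eqref{eq:Ciro**}). The practical difference is in the base case: the paper's base $t=1$ invokes the geometric fact that an irreducible $\ell$-dimensional variety spans at least a $\mathbb{P}^{\ell}$, while your base $\ell=1$ needs only the trivial bound $h_{Y_0}(s)\geqslant 1$, making your version marginally more self-contained. Otherwise the two arguments are dual to one another and of equal strength. (Your aside that $m\geqslant 2$ is correct but not actually needed: the argument goes through verbatim for any $m\geqslant 1$.)
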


\begin{proof}[Proof of Claim \ref{claim:h_Y}]
We recall that $Y_{\ell}$ is a general linear section of $Y\subset V(G_1)\cong \mathbb{P}^n$ of dimension $\ell$, so that $Y_{\ell}$ is irreducible and it sits in a projective space $\Lambda_{\ell} \cong {\mathbb P}^{n+\ell-m}$, with $1 \leqslant \ell \leqslant m$. 
If $t=1$, the claim is true as the linear span of $Y_\ell\subset \Lambda_{\ell}$ has dimension at least $\ell=\dim Y_{\ell}$, i.e. $Y_{\ell}$ contains at least $\ell+1$ independent points of $\Lambda_{\ell}$.
Then we argue by induction on $t$, and using inequality \eqref{eq:Ciro**} applied to $Y_{\ell}$, we obtain
$$
h_{Y_{\ell}}(t) \geqslant \sum_{j= 1}^{\ell} h_{Y_{j}} (t-1) + 1 \geqslant \sum_{j= 1}^{\ell} {j+t-1 \choose t-1} + 1 = \sum_{j= 0}^{\ell} {j+t-1 \choose t-1} =
{\ell+t \choose t}
$$
as desired.
\end{proof}

Finally, setting $t = \alpha+1$ and $\ell = m = n-\alpha+1$, Claim \ref{claim:h_Y} ensures that
$$
h_Y(\alpha+1) \geqslant {m + \alpha + 1 \choose \alpha+1} = {n+ 2 \choose \alpha+1}.
$$
By assumption, we have $\alpha < h-1 \leqslant n$, and hence ${n+ 2 \choose \alpha+1} \geqslant n+1$.
Thus $h_Y(\alpha+1)>n$, which concludes the proof of Theorem \ref{theorem:dimV^h_p}.
\end{proof}

\section{Polar hypersurfaces and cones of lines having high contact}\label{section:polar}
 
Let $n \geqslant 2$ be an integer and let $X := V(F)\subset \mathbb{P}^{n+1}$ be a smooth hypersurface of degree $d\geqslant 2$. 
Given a point $q= [q_0:\ldots:q_{n+1}] \in \mathbb P^{n+1}$ and an integer $0\leqslant s\leqslant d$, we introduce the \emph{$s$--th polar hypersurface of $X$  with respect to $q$} as the hypersurface $\Delta^s_q=\Delta^s_q(X)\subset \mathbb P^{n+1}$ defined by the vanishing of the polynomial of degree $d-s$
\begin{equation}
\Pol_{q}^s(F)(x_0,\ldots,x_{n+1}):= \left(q_0\frac{\partial}{\partial x_0}+\cdots+q_{n+1}\frac{\partial}{\partial x_{n+1}}\right)^{(s)}F(x_0,\ldots,x_{n+1}),
\end{equation}
where $(-)^{(s)}$ denotes the usual symbolic power and $\Pol_{q}^0(F)=F$, that is $\Delta^0_q=X$ for any $q\in \mathbb{P}^{n+1}$.
Furthermore, we define the intersection scheme
\begin{equation}\label{eq:Delta}
\Delta_{q,h}(X):= \bigcap_{s=0}^{h-1}\Delta^s_q.
\end{equation}

In this section, we use polar hypersurfaces of $X$ and Theorem \ref{theorem:dimV^h_p} to show that for general $q,q'\in X$ and for any $2\leqslant h\leqslant \frac{n}{2}+1$, there exists a general point $p\in X$ such that $q,q'\in V^{h}_p$.
In particular, this fact is crucial in order to prove Theorem \ref{theorem:conngon}.  
To start, we prove the following.

\begin{lemma}\label{lemma:polari} 
Let $n \geqslant 2$ be an integer and let $ X \subset \mathbb{P}^{n+1}$ be a smooth hypersurface of degree $d \geqslant 2$.
For any integer $2 \leqslant h \leqslant \frac{n}{2} +1$ and for any $q, q'\in X$, there exists a point $p\in X$ such that $q, q'\in V_p^{h}$.
\end{lemma}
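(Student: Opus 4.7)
The plan is to reduce the claim to a statement about the intersection of polar hypersurfaces by exploiting a polarization identity that swaps the roles of $p$ and $q$.

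First, I would establish the symmetry formula
$$G_{p,k}(q) \;=\; \Pol_q^k(F)(p), \qquad \forall\, p,q \in \mathbb{P}^{n+1},\ k \geqslant 0,$$
which is a direct comparison of multinomial expansions: both sides equal $\sum_{|I|=k} \tfrac{k!}{I!}\, q^{I}\, \frac{\partial^k F}{\partial x^I}(p)$. This identity is the engine of the argument, because it converts the condition ``$q \in V_p^{h}$'' --- i.e.\ $G_{p,k}(q)=0$ for $1\leqslant k \leqslant h-1$ --- into the condition ``$p \in \Delta_q^k$ for $1\leqslant k\leqslant h-1$''. Since $p\in X=\Delta_q^0$ as well, this is precisely $p\in \Delta_{q,h}(X)$ in the notation of \eqref{eq:Delta}. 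Applying the same reasoning to $q'$, the problem of finding the required $p\in X$ becomes that of finding a point in
$$\Delta_{q,h}(X) \cap \Delta_{q',h}(X).$$

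Next, I would observe that this locus is cut out in $\mathbb{P}^{n+1}$ by the $2h-1$ hypersurfaces
$$X,\ \Delta_q^1,\ldots,\Delta_q^{h-1},\ \Delta_{q'}^1,\ldots,\Delta_{q'}^{h-1}.$$
Each of these is a proper hypersurface: $X$ is a smooth hypersurface by assumption, and for any $q\in\mathbb{P}^{n+1}$ and $1\leqslant s\leqslant h-1<d$, the polar polynomial $\Pol_q^s(F)$ has positive degree $d-s$ and is not identically zero. The hypothesis $h \leqslant \tfrac{n}{2}+1$ is then precisely equivalent to $2h-1 \leqslant n+1$, so the projective intersection theorem guarantees that the intersection of these $2h-1$ hypersurfaces in $\mathbb{P}^{n+1}$ is nonempty (in fact of dimension at least $n-2h+2\geqslant 0$), producing the desired point $p$.

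There is essentially no hard step once the polarization identity is in hand: the reformulation $q\in V_p^h \Leftrightarrow p\in\Delta_{q,h}(X)$ is the substantive content, and the numerical hypothesis $h\leqslant \tfrac{n}{2}+1$ is exactly tailored to make the projective intersection theorem apply. The only point requiring a moment's care is to verify that each polar $\Pol_q^s(F)$ is genuinely nonzero so that $\Delta_q^s$ is a bona fide hypersurface, which is a routine consequence of the smoothness of $X$ (one can exhibit a nonvanishing monomial in the expansion after putting $q$ in general position).
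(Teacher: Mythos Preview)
Your proof is correct and follows essentially the same approach as the paper: both reduce the problem to showing that $\Delta_{q,h}(X)\cap\Delta_{q',h}(X)$ is nonempty, and then observe that this locus is cut out in $\mathbb{P}^{n+1}$ by $2h-1\leqslant n+1$ hypersurfaces. The only difference is one of emphasis---the paper phrases the key equivalence $q\in V_p^h \Leftrightarrow p\in\Delta_{q,h}(X)$ geometrically via intersection multiplicity of lines, whereas you derive it algebraically from the polarization identity $G_{p,k}(q)=\Pol_q^k(F)(p)$; and your extra care about the nonvanishing of $\Pol_q^s(F)$ is not strictly needed, since a vanishing polar would only enlarge the intersection.
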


\begin{proof}  
We point out that for $q\in \mathbb{P}^{n+1}$, the intersection $X\cap \Delta^1_q$ consists of the points $p \in X$ such that $q\in T_{p}X$, i.e. the line $\langle q,p \rangle$ intersects $X$ with multiplicity at least 2 at $p$, provided that $p\neq q$.
Similarly, given two points $q\in \mathbb{P}^{n+1}$ and $p\in X$ with $p\neq q$, the line $\langle q,p \rangle$ intersects $X$ with multiplicity at least $h$ at $p$---that is $q\in V^h_p$---if and only if $p$ belongs to $\Delta_{q,h}(X)$ defined in \eqref{eq:Delta}.

Therefore, proving the statement is equivalent to showing that for any $q,q'\in X$, the intersection of $\Delta_{q,h}(X)$ and $\Delta_{q',h}(X)$ is not empty.
Since both $\Delta_{q,h}(X)$ and $\Delta_{q',h}(X)$ lie on $X=\Delta^0_q=\Delta^0_{q'}$, then $\Delta_{q,h}(X)\cap\Delta_{q',h}(X)$ is the intersection of $2h-1$ hypersurfaces of $\mathbb{P}^{n+1}$, which is non--empty because of the assumption $h\leqslant \frac{n}{2}+1$. 
\end{proof}

In addition, when both $X\subset \mathbb{P}^{n+1}$ and the points $q,q'\in X$ are assumed to be general, the following holds.
\begin{lemma}\label{lemma:general} 
Let $n \geqslant 2$ be an integer and let $X \subset \mathbb{P}^{n+1}$ be a general hypersurface of degree $d \geqslant 2$. 
Then, for any integer $2 \leqslant h \leqslant \frac{n}{2} +1$ and for general $q,q'\in X$, there exists a general point $p\in X$ such that $q,q'\in V^{h}_p$.
\end{lemma}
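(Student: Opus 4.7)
The plan is to consider the incidence correspondence
\[
I = \{(p, q, q') \in X \times X \times X : q, q' \in V^h_p\},
\]
equipped with the two projections $\alpha \colon I \to X$ onto the first factor and $\beta \colon I \to X \times X$ onto the last two. I will exhibit an irreducible component $I_0$ of $I$ on which both projections are dominant. Granting this, the irreducibility of $I_0$ implies that for any prescribed open dense $U \subset X$ the subset $\alpha^{-1}(U) \cap I_0$ is open and dense in $I_0$, so its image under $\beta$ contains an open dense of $X \times X$; and this gives the lemma, since for every $(q,q')$ in this image there is $p \in U$ with $q, q' \in V^h_p$.

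First, Lemma \ref{lemma:polari} guarantees that $\beta$ is surjective, and its proof identifies the fiber $\beta^{-1}(q,q')$ with $\Delta_{q,h}(X) \cap \Delta_{q',h}(X)$, an intersection of $2h-1$ hypersurfaces in $\mathbb{P}^{n+1}$ (counting $X$ once). By a standard Bertini-type argument---using that for general $X$ the polars of $X$ at varying points form a sufficiently general family of hypersurfaces---this intersection has the expected dimension $n+2-2h$ for general $(q,q')$. Choosing $I_0$ to be an irreducible component of $I$ dominating $X \times X$ via $\beta$, this gives $\dim I_0 = 2n + (n+2-2h) = 3n+2-2h$.

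Next I would analyze $\alpha|_{I_0}$. The fiber of $\alpha$ over a point $p \in X$ is $(V^h_p \cap X)^2$, and by Theorem \ref{theorem:dimV^h_p} every $V^h_p$ has dimension exactly $n+2-h$. For general $X$ and general $p \in X$, the cone $V^h_p$ is not contained in $X$, so $V^h_p \cap X$ has the expected dimension $n+1-h$ and the $\alpha$-fiber over such $p$ has dimension $2(n+1-h) = 2n+2-2h$. A dimension count then yields
\[
\dim \alpha(I_0) \geq \dim I_0 - (2n+2-2h) = n,
\]
forcing $\alpha|_{I_0}$ to be dominant, as required.

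The main obstacle is the verification that $V^h_p \not\subset X$ for general $p \in X$ when $X$ itself is general; without this, the $\alpha$-fiber could jump in dimension and spoil the count. The case $h = 2$ is trivial since $V^2_p = T_pX$ is a hyperplane while $X$ has degree $d \geq 2$. For $h \geq 3$, by (the proof of) Theorem \ref{theorem:dimV^h_p} the cone $V^h_p$ is a complete intersection of type $(1, 2, \ldots, h-1)$ and hence of degree $(h-1)!$; an incidence/parameter-count argument in the spirit of Lemma \ref{lemma:3ple}, stratifying the linear system $\mathcal L$ of degree-$d$ hypersurfaces by whether they contain such a complete intersection through some marked point, will show that the locus of $X$ admitting $V^h_p \subset X$ for some $p$ is a proper closed subset of $\mathcal L$, so that the required genericity holds.
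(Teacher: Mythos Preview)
Your incidence correspondence $I$ and its two projections are exactly the paper's $\mathcal{P}_h$ with $\pi_3$ and $\pi_{12}$, so the overall strategy coincides. Two differences in execution are worth noting.

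First, you try to pin down $\dim I_0$ \emph{exactly} via a ``Bertini-type argument'' on the polar intersections. This step is both unjustified as stated and unnecessary: the paper uses only the lower bound $\dim I_0\geqslant 3n+2-2h$, which is immediate from the description of $\beta^{-1}(q,q')=\Delta_{q,h}(X)\cap\Delta_{q',h}(X)$ as an intersection of $2h-1$ hypersurfaces in $\mathbb{P}^{n+1}$.

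Second, your direct dimension count for $\alpha(I_0)$ is circular as written: you bound the $\alpha$--fiber over a \emph{general} $p\in X$, but what the inequality $\dim\alpha(I_0)\geqslant \dim I_0-(2n+2-2h)$ requires is a bound on the fiber over a general point of $\alpha(I_0)$, and these agree only once you already know $\alpha|_{I_0}$ is dominant. The paper handles this by contradiction: if $\alpha|_{I_0}$ were not dominant, the generic fiber of $\alpha|_{I_0}$ would have dimension strictly greater than $2n+2-2h$; since that fiber sits in $Z_p\times Z_p$ with $Z_p=V^h_p\cap X$, one obtains $\dim Z_p>n+1-h$ and hence $\dim V^h_p>n+2-h$, contradicting Theorem~\ref{theorem:dimV^h_p}, which applies to \emph{every} $p\in X$. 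This uses only the easy lower bound on $\dim I_0$ together with Theorem~\ref{theorem:dimV^h_p}, and dispenses with both your Bertini step and the separate parameter count you propose at the end.

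Your instinct that one must know $V^h_p\not\subset X$ is, however, correct: the implication $\dim Z_p>n+1-h\Rightarrow\dim V^h_p>n+2-h$ tacitly uses that no component of $V^h_p$ lies in $X$. The paper treats this as implicit rather than proving it separately, so your attention to this point is not misplaced; but the contradiction framing makes clear that it is the only missing ingredient, whereas your direct argument layers it on top of two other unproven claims.
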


\begin{proof}
Set $2 \leqslant h \leqslant \frac{n}{2} +1$ and consider the variety
$${\mathcal P}_h := \overline { \left.\left\{(q,q',p) \in X \times X \times X \right| q\neq q' \text{ and } p \in \Delta_{q,h} \cap \Delta_{q',h} \right\} }$$
endowed with the projections
$$
X\times X\stackrel{\pi_{12}}{\longleftarrow}\mathcal{P}_h \stackrel{\pi_{3}}{\longrightarrow}X.
$$
Thanks to Lemma \ref{lemma:polari}, the map $\pi_{12}$ is surjective. 
Let $\mathcal{Z}\subset {\mathcal P}_h$ be an irreducible component dominating $X\times X$.
Therefore, the proof of Lemma \ref{lemma:polari} gives that for any $(q,q')\in X\times X$, 
\begin{equation}\label{eq:dimDelta}
\dim\left(\Delta_{q,h}(X)\cap\Delta_{q',h}(X)\right)\geqslant n+2-2h \quad \text{and hence} \quad \dim \mathcal{Z}\geqslant 3n+2-2h.
\end{equation}
Moreover, given any point $p\in \pi_3(\mathcal{Z})$ and setting $Z_p := V_p^h \cap X$, we have
\begin{equation}\label{eq:ppp}
\left({\pi_3}_{|\mathcal{Z}}\right)^{-1}(p)\subseteq \pi_3^{-1} (p) \cong Z_p \times Z_p.
\end{equation}

In order to prove that for general $q,q'\in X$, there exists a general point $p\in X$ such that $q,q'\in V^{h}_p$, it is enough to prove that $\mathcal{Z}$ dominates $X$ via $\pi_3$.
We assume by contradiction that ${\pi_3}_{|\mathcal{Z}}$ is not dominant, i.e. $\dim {\pi_3}(\mathcal{Z}) < n  = \dim X$. 
If $p\in \pi_3(\mathcal{Z})$ is a general point, then \eqref{eq:dimDelta} and \eqref{eq:ppp} give
\begin{equation}\label{eq:part1}
\dim(Z_p\times Z_p)\geqslant \dim \left(\left({\pi_3}_{|\mathcal{Z}}\right)^{-1}(p) \right)= \dim\mathcal{Z}-\dim {\pi_3}(\mathcal{Z}) > 2n+2-2h.
\end{equation}
It follows that $\dim Z_p > n+1-h$ and hence $\dim V_p^h > n+2-h$.
Since the latter inequality contradicts Theorem \ref{theorem:dimV^h_p}, we conclude that ${\pi_3}_{|\mathcal{Z}}$ is dominant.
\end{proof}

\section{$k$--irrationality degree and connecting gonality of general hypersurfaces}\label{section:bounds} 

In this section we apply the previous results in order to bound the $k$--irrationality degree and the covering gonality of a very general hypersurface $X\subset \mathbb{P}^{n+1}$ of degree $d\geqslant 2n+2$.

According to Section \ref{section:cones}, we recall that if $V^h_p\subset \mathbb{P}^{n+1}$ is the cone of tangent lines of order $h$ at $p\in X$, we denote by $\Lambda^h_p$ a general hyperplane section of $V^h_p$. 
The link between the cones $V^h_p\subset \mathbb{P}^{n+1}$ and the invariants $\irr_k(X)$ and $\conngon(X)$ is expressed by the following result, which extends \cite[Proposition 2.12]{BCFS} to higher dimensional subvarieties of $X$. 

\begin{proposition}\label{proposition:Cone} 
Let $n\geqslant 3$ and $1\leqslant k\leqslant n-1$ be integers.
Let $X\subset \mathbb{P}^{n+1}$ be a very general hypersurface of degree $d\geqslant 2n+2$.
Suppose that for a general point $q\in X$, there exist a $k$--dimensional irreducible subvariety $Z\subset X$ containing $q$ and a dominant rational map $\varphi\colon Z\dashrightarrow \mathbb{P}^k$ of degree $c\leqslant d-3$. Then:
\begin{itemize}
  \item[(i)] there exists a point $p\in Z$ such that $Z \subset V_{p}^{d-c}\cap X$;
  \item[(ii)] the map $\varphi\colon Z\dashrightarrow \mathbb{P}^k$ of degree $c$ is the projection from $p$.
\end{itemize}
In particular, the image of $Z$ under $\varphi$ is a $k$--dimensional rational variety $R\subset \Lambda_{p}^{d-c}$.
\begin{proof}
The case $k=1$ is covered by  \cite[Proposition 2.12]{BCFS}, so we assume $2\leqslant k\leqslant n-1$.
Since $d\geqslant 2n+2$, we have that $c\leqslant d-3 < 2d-2n-1$.
Let $z\in Z$ be a general point and let $\ell\subset \mathbb{P}^k$ be a general line passing through $\varphi(z)$.
Consider the curve $C_{\ell}\subset \varphi^{-1}(\ell)$ which is the union of all irreducible components of curves in $Z$ which dominate $\ell$ via $\varphi$. 
We claim that $C_{\ell}$ is irreducible.
Indeed, if $C'$ and $C$ were two irreducible components, then $\gon(C_{\mathrm{red}})+\gon(C_{\mathrm{red}}')\leqslant \deg(\varphi_{|{C_{\ell}}})$.
Being $\varphi_{|{C_{\ell}}}\colon C_{\ell} \dashrightarrow \ell\cong \mathbb{P}^1$ a map of degree $c$, either $C_{\mathrm{red}}$ or $C_{\mathrm{red}}'$ would have gonality at most $\frac{c}{2}< d-n-\frac{1}{2}$.
By varying $\ell\subset \mathbb{P}^k$, $z\in Z$ and $q\in X$, we  conclude that $X$ is covered by curves of gonality smaller than $d-n$,
 which is impossible (cf. \cite[Theorem A]{BDELU}).
The same argument shows that $C_{\ell}$ is reduced.

Therefore, we may define a family $\mathcal{C}\stackrel{\pi}{\longrightarrow} U\subset \mathbb{G}(1,k)$ of curves with a map of degree $c$ to $\mathbb{P}^1$, where $U\cong\mathbb{P}^{k-1}$ parametrizes lines through $\varphi(z)$, and for any $[\ell]\in U$, the corresponding curve is $C_{\ell}$.
As we vary $q\in X$ (and hence $Z$), we may define a family of curves covering $X$, each endowed with a $c$--gonal map.
Thus \cite[Proposition 2.12]{BCFS} ensures that for general $[\ell]\in U$, there exists a point $x_\ell\in C_\ell$ such that $C_\ell\subset X\cap V^{d-c}_{x_\ell}$ and the degree $c$ map $\varphi_{|C_\ell}\colon C_\ell\dashrightarrow \ell\cong \mathbb{P}^1$ is the projection from $x_\ell$.

Next we need to show that all the points $x_\ell$ coincide with some fixed point $p\in Z$.
For this we consider the map $\psi\colon U\dashrightarrow Z\subset X$ sending $[\ell]\in U$ to the corresponding point $x_\ell$.
Since $U\cong \mathbb{P}^{k-1}$, the image of $\psi$ is unirational.
As $X$ does not contain rational curves (see e.g. \cite{Cl}), we conclude that $\psi(U)$ is a point $p\in Z$.
Thus $Z$ is covered by the curves $C_\ell\subset V^{d-c}_{p}$ for general $[\ell]\in U$, and the degree $c$ map $\varphi_{|C_\ell}\colon C_\ell\dashrightarrow \ell\cong \mathbb{P}^1$ is the projection from $p$.
The assertion follows.
\end{proof}
\end{proposition}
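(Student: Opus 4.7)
The plan is to reduce to the curve case treated in \cite[Proposition 2.12]{BCFS} by slicing $Z$ with preimages of lines through a general image point. Fix a general point $z\in Z$ and, for each line $\ell\subset\mathbb{P}^k$ through $\varphi(z)$, let $C_\ell\subset\varphi^{-1}(\ell)$ be the union of the irreducible components that dominate $\ell$ via $\varphi$. As $\ell$ ranges over $U:=\{\ell\subset\mathbb{P}^k\mid\varphi(z)\in\ell\}\cong\mathbb{P}^{k-1}$, this produces a family of curves on $Z$, each equipped with the degree $c$ map $\varphi|_{C_\ell}\colon C_\ell\dashrightarrow\ell\cong\mathbb{P}^1$; letting $q$ (and hence $Z$) vary over $X$, these curves sweep out all of $X$.

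First I would show that $C_\ell$ is irreducible and generically reduced. If it split as $C\cup C'$ with $C\neq C'$ irreducible, the sum of the gonalities of $C_{\mathrm{red}}$ and $C'_{\mathrm{red}}$ would be at most $c\leqslant d-3$, so one component would have gonality at most $(d-3)/2$. Since the assumption $d\geqslant 2n+2$ gives $(d-3)/2<d-n$, and since such components would cover $X$ as $q,z,\ell$ vary, this would contradict the lower bound $\covgon(X)\geqslant d-n$ of \cite[Theorem~A]{BDELU}. A parallel argument rules out non-reduced structures.

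Next I would apply \cite[Proposition~2.12]{BCFS} to the covering family $\{C_\ell\}$: for a general $[\ell]\in U$ there exists a point $x_\ell\in C_\ell$ such that $C_\ell\subset V^{d-c}_{x_\ell}\cap X$ and the map $\varphi|_{C_\ell}$ coincides with the projection from $x_\ell$. The hard part is showing that $x_\ell$ is independent of $\ell$. The assignment $\psi\colon U\dashrightarrow Z\subset X$ given by $[\ell]\mapsto x_\ell$ has unirational image because $U\cong\mathbb{P}^{k-1}$; however, a very general hypersurface of degree $d\geqslant 2n+2$ contains no rational curves (see e.g.~\cite{Cl}), hence no positive-dimensional unirational subvariety. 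Therefore $\psi$ must be constant, with image a single point $p\in Z$.

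Finally, since the curves $C_\ell\subset V^{d-c}_{p}$ sweep out $Z$ as $\ell$ varies in $U$, one obtains $Z\subset V^{d-c}_{p}\cap X$, proving (i). On each $C_\ell$ the map $\varphi$ agrees with the projection from $p$, so by density of $\bigcup_{\ell}C_\ell$ in $Z$ the map $\varphi$ itself is that projection, giving (ii). The statement on $\varphi(Z)$ follows at once, since projection from the vertex $p$ sends $V^{d-c}_p\smallsetminus\{p\}$ onto $\Lambda^{d-c}_p$ and $\varphi$ is dominant onto its image.
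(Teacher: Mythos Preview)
Your argument is correct and follows essentially the same route as the paper's proof: slice $Z$ by preimages of lines through $\varphi(z)$, use the covering gonality bound $\covgon(X)\geqslant d-n$ to force $C_\ell$ to be integral, apply \cite[Proposition~2.12]{BCFS} to get the points $x_\ell$, and then use the absence of rational curves on $X$ to conclude that $[\ell]\mapsto x_\ell$ is constant. The only cosmetic difference is that the paper records the case $k=1$ as already done before passing to $k\geqslant 2$.
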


\begin{remark}\label{remark:dimension} 
Let $X$ be an irreducible projective variety of dimension $n$ and let $\mathcal{Z}\stackrel{\pi}\longrightarrow T$ be a family of $k$--dimensional subvarieties of  $X$. 
If $\mathcal{Z}\stackrel{\pi}\longrightarrow T$ is a \emph{covering family} (i.e. for general $q\in X$, there exists $t\in T$ such that $q\in Z_t=\pi^{-1}(t)$), then
$\dim (T)\geqslant n-k$.
Indeed, the  map $f\colon\mathcal{Z}\longrightarrow X$ must be dominant and hence $\dim (\mathcal{Z})=\dim (T)+k\geqslant n$.

If in addition $\mathcal{Z}\stackrel{\pi}\longrightarrow T$ is a \emph{connecting family} (i.e. for general $q,q'\in X$, there exists $t\in T$ such that $q,q'\in Z_t$), then $\dim (T)\geqslant 2n-2k$.
Indeed the map $\mathcal{Z}\times_T\mathcal{Z}\longrightarrow X\times X$ induced by  $f\colon\mathcal{Z}\longrightarrow X$  must be dominant, hence $\dim\left(\mathcal{Z}\times_T\mathcal{Z}\right)=2k+\dim (T)\geqslant 2n$.
\end{remark}

\begin{remark}\label{remark:nm}
If $X\subset \mathbb{P}^{n+1}$ and $Y\subset \mathbb{P}^{m+1}$ are very general hypersurfaces of degree $d$, with $n\leqslant m$, then
$$\irr_k(Y)\leqslant\irr_k(X)  \text{ for any }1\leqslant k\leqslant n \quad \text{and}\quad \conngon(Y)\leqslant\conngon(X).$$
Indeed, the section of $Y$ by a general $(n+1)$--plane of $\mathbb{P}^{m+1}$ is a very general hypersurface of $\mathbb{P}^{n+1}$.
\end{remark}

We can now prove Theorem \ref{theorem:irr_k}.

\begin{proof}[Proof of Theorem \ref{theorem:irr_k}]
When $k=n$, the assertion is covered by \cite[Theorem C]{BDELU} and $\irr_n(X)=d-1$.

If $k=n-1$, we claim that $\irr_{n-1}(X)\leqslant d-2$. 
Indeed, tangent hyperplane sections $Z=X\cap T_{p}X$ of $X$ are $(n-1)$--dimensional varieties of degree $d$ having a double point at $p$ (see Lemma \ref {lemma:3ple}), so that the projection from $p$ is a dominant rational map $Z\dashrightarrow \mathbb{P}^{n-1}$ of degree $d-2$.
On the other hand, suppose by contradiction that $\irr_{n-1}(X)=c\leqslant d-3$.
Proposition \ref{proposition:Cone} ensures that any $(n-1)$--dimensional subvariety $Z\subset X$ computing $\irr_{n-1}(X)$ is contained in $X\cap V^{d-c}_p$ for some $p\in X$.
Thanks to \eqref{eq:inequalities} and \cite[Theorem A]{BDELU}, we have $c\geqslant \irr_1(X)\geqslant d-n$, so that $3\leqslant d-c\leqslant n$.
Then Theorem \ref{theorem:dimV^h_p} gives that $\dim V^{d-c}_p=n+2-(d-c)$.
In order to cover $X$ by $(n-1)$--dimensional varieties cut out by the cones $V^{d-c}_p$, we must have that $\dim(X\cap V^{d-c}_p)=n+1-(d-c)\geqslant n-1$ and hence $c\geqslant d-2$, a contradiction.
Thus $\irr_{n-1}(X)=d-2$.

If $1\leqslant k\leqslant n-2$, we claim that $\irr_k(X)\leqslant d-3$.
To see this, we note that for any $p\in X$, Theorem \ref{theorem:dimV^h_p} ensures that $V^{3}_p$ is a cone in $T_{p}X\cong \mathbb{P}^{n}$ over a quadric $\Lambda^3_p\subset \mathbb{P}^{n-1}$ (cf. Section \ref{section:cones}). 
Then the variety $Z=X\cap V^{3}_p$ has dimension $n-2$ and the projection from $p$ is a dominant map $Z\dashrightarrow \Lambda^3_p$ of degree $d-3$ to a rational variety.
Thus $\irr_1(X)\leqslant \dots\leqslant\irr_{n-2}(X)\leqslant d-3$.

Finally, any $k$--dimensional subvariety $Z\subset X$ computing $c:=\irr_k(X)$ is contained in some $X\cap V^{d-c}_p$ by Proposition  \ref{proposition:Cone}.
As above, we deduce $d-c \leqslant n$ and for any $p \in X$, Theorem \ref{theorem:dimV^h_p} gives $\dim(X\cap V^{d-c}_p)=n+1-(d-c)$.
Thus, in order to cover $X$ by $k$--dimensional varieties in $X\cap V^{d-c}_p$, we must have that $n+1-(d-c)\geqslant k$, that is $c\geqslant d-1-n+k$.

For $k=n-2$, the latter inequality gives $\irr_{n-2}(X)\geqslant d-3$, so the assertion follows.
\end{proof}

Now, we prove Theorem \ref{theorem:conngon}.

\begin{proof}[Proof of Theorem \ref{theorem:conngon}]
By \cite[Lemma 2.2]{BCFS}, if $p\in X$ is a general point and $3\leqslant h\leqslant \min\{n+1,d\}$, then $\Lambda^h_p$ is a general complete intersection of type $(2,3,\dots,h-1)$ in $\mathbb{P}^{n-1}$.
If $\Lambda^h_p$ is a Fano variety, then it is rationally connected (see \cite{KMM}).
The canonical bundle of $\Lambda^h_p\subset \mathbb{P}^{n-1}$ is $\mathcal{O}_{\Lambda^h_p}(\sum_{i=2}^{h-1}i-n)$.
Therefore, $\Lambda^h_p$ is a Fano variety if and only if
\begin{equation}\label{eq:iff}
\sum_{i=2}^{h-1}i\leqslant n-1 \;\;\Longleftrightarrow \;\;\frac{h(h-1)}{2}-1\leqslant n-1\;\; \Longleftrightarrow \;\;h\leqslant\left\lfloor\frac{\sqrt{8n+1}+1}{2}\right\rfloor.
\end{equation}
We note that $d> n+1\geqslant \left\lfloor\frac{\sqrt{8n+1}+1}{2}\right\rfloor\geqslant 3$ and we assume hereafter $h:=\left\lfloor\frac{\sqrt{8n+1}+1}{2}\right\rfloor$, so that the general $\Lambda^h_p$ is a smooth, rationally connected, complete intersection, whose dimension is $n+1-h$.

Setting $Z_p:=V^h_p\cap X$, the projection $\varphi\colon Z_p\dashrightarrow \Lambda^h_p$ from $p$ has degree $d-h\leqslant d-3$.
Given two general points $q,q'\in Z_p$, let $D\subset \Lambda^h_p$ be a rational curve connecting $\varphi(q)$ and $\varphi(q')$, and let $C:=\varphi^{-1}(D)$.
By arguing as for the curves $C_{\ell}$ in the proof of Proposition \ref{proposition:Cone}, we deduce that $C$ is integral.
Then $C$ is an irreducible curve passing through two general points $q,q'\in Z_p$ endowed with a map $\varphi_{|C}\colon C\dashrightarrow D$ of degree $d-h$.
Thus $\conngon(Z_p)\leqslant d-h=d-\left\lfloor\frac{\sqrt{8n+1}+1}{2}\right\rfloor$.

Since $n \geqslant 4$, we have $h=\left\lfloor\frac{\sqrt{8n+1}+1}{2}\right\rfloor \leqslant \frac{n}{2} +1$. 
So Lemma \ref{lemma:general} ensures that for general $q,q'\in X$, there exists a general point $p\in X$ such that $q,q'\in Z_p$, i.e. the varieties $Z_p$ produce a connecting family.
Thus $\conngon(X)\leqslant \conngon(Z_p)\leqslant d-\left\lfloor\frac{\sqrt{8n+1}+1}{2}\right\rfloor$.
\end{proof}

Let us consider integers $n,d\geqslant 2$ and $2\leqslant h\leqslant \min\{n+1,d\}$. 
Before proving Theorem \ref{theorem:conngonRY}, we aim at introducing a suitable parameter space $\Theta^{n+1}_h$ for 4--tuples $(p,\ell_1,\ell_2, X)$, where $X\subset \mathbb{P}^{n+1}$ is a hypersurface of degree $d$, $p\in X$ and $\ell_1,\ell_2\subset V^h_{p,X}$ are lines having intersection multiplicity at least $h$ with $X$ at $p$. 

To this aim, we define $S_d := \mathbb{C} [x_0, \ldots, x_{n+1}]_d$ and $S_d^* := S_d \setminus \{0\}$ as in \eqref{eq:S_k}, and we set 
$$\mathbb{P} :=\mathbb{P}^{n+1}, \quad \mathbb{G} := \mathbb{G} (1,n+1)\quad \text{and}\quad N+1 := \dim_{\mathbb{C}}(S_d)={d+n+1\choose d}. $$
Let $\mathcal{P} \subset \mathbb{P}\times \mathbb{G}$ be the universal family of lines over $\mathbb{G}$, endowed with the projections
$ \mathbb{P} \stackrel {\pi_1} \longleftarrow  \mathcal{P} \stackrel {\pi_2}\longrightarrow \mathbb G $.
The morphism $\pi_1$ makes $\mathcal P$ a $\mathbb{P}^n$--bundle over $\mathbb{P}$, whereas $\pi_2$ makes $\mathcal P$ a $\mathbb{P}^1$-bundle over $\mathbb G$, so that $\dim (\mathcal P) =2 n+1$. 
Consider the fibred product
$$
\mathcal P \times_{\mathbb{P}} \mathcal P := \left\{\left(p, [\ell_1], [\ell_2]\right) \left|\ p \in \ell_1 \cap \ell_2 \right.\right\} \subset \mathbb{P} \times \mathbb{G} \times \mathbb{G}
$$
and its \emph{diagonal locus}
$$
\Delta:= \left\{(p, [\ell], [\ell]) \in \mathcal P \times_{\mathbb{P}} \mathcal P \left| \ p \in \ell \right.\right\} \cong \mathcal P.
$$
Let $\widetilde{\mathcal{P}}$ denote the blow-up of $\mathcal P \times_{\mathbb{P}} \mathcal P $ along $\Delta$ and let $\widetilde{\Delta}$ be the exceptional divisor. Thus,
$$
\dim (\widetilde{\mathcal{P}}) = 3n+1 = \dim (\widetilde{\Delta}) + 1.
$$
Moreover, as a set, we have
$$
\widetilde{\Delta}= \left\{(p, [\ell], [\Pi])  \left| \ p \in \ell \subset \Pi\right.\right\} \subset \mathbb{P} \times \mathbb{G} \times \mathbb{G}(2, n+1).\footnote{This fact follows from a standard argument, but we sketch it for the sake of completeness.
The exceptional divisor $\widetilde{\Delta}$ is the projectivization of the normal bundle of $\Delta$ in $\mathcal P\times_{\mathbb P}\mathcal P$. 
Let $\Gamma$ denote the algebraic set $\{(p,[\ell],[\Pi])|p\in\ell\subset \Pi\}$ on the right--hand side. 
There is an obvious morphism $\xi\colon \Gamma\longrightarrow \widetilde{\Delta}$, that maps a triple $(p,[\ell],[\Pi])\in \Gamma$ to the point of $\widetilde{\Delta}$ corresponding to the deformation of $(p,[\ell],[\ell])$ to a pair $(p,[\ell],[\ell'])$, where $\ell'$ moves in the pencil of lines passing through $p$ inside the plane $\Pi$. The map $\xi$ is clearly injective. It is also surjective, because it has an inverse $\eta\colon \widetilde{\Delta}\longrightarrow \Gamma$ defined as follows. A point $x\in\widetilde{\Delta}$ that lies over $(p,[\ell],[\ell])$ corresponds to a deformation $\{(p_t,[\ell_{1,t}],[\ell_{2,t}])\}$ of $(p,[\ell],[\ell])$, with $t$ moving in a disc with centre $0$. Then $\eta$ associates to $x$ the point $(p,[\ell],[\Pi])\in\Gamma$, where $\Pi$ is the flat limit of the plane spanned by $\ell_{1,t}$ and $\ell_{2,t}$, when $t\to 0$. This shows that $\Gamma$ and $\widetilde{\Delta}$ are set--theoretically the same.}
$$
Given a polynomial $F \in S_d^*$, let $X_F:= V(F) \subset \mathbb P$ denote its vanishing locus.
Then we define the variety ${\Theta}^{n+1}_h \subset \widetilde{\mathcal{P}} \times \mathbb{P}(S_d)$ as 
\begin{equation}\label{Theta_h}
{\Theta}^{n+1}_h :=\overline{\left\{ (p, [\ell_1], [\ell_2], [F])\in \widetilde{\mathcal{P}} \times \mathbb{P}(S_d) \left| \begin{array}{l}\ell_1\neq \ell_2 \text{ and for }1 \leqslant i \leqslant 2,\\ \text{ either } \ell_i\subset X_F \text{ or } X_F \cdot \ell_i \geqslant hp \end{array} \right.\right\}}.
\end{equation}

\begin{lemma}\label{lem:quadratinor} For any $2 \leqslant h \leqslant \min \{n+1,d\}$, ${\Theta}^{n+1}_h$ is smooth, irreducible, of dimension
$3n + 2 + N - 2h$, dominating both $\widetilde{\mathcal{P}}$ and $\mathbb{P}(S_d)$ via the  projection maps 
\[
 \widetilde{\mathcal{P}} \stackrel {\Psi} \longleftarrow  {\Theta}^{n+1}_h \stackrel {\Phi}\longrightarrow  \mathbb{P}(S_d).
 \]
\end{lemma}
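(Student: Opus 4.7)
\emph{Proof plan.} The strategy is to show that the projection $\Psi\colon \Box_h \to \mathcal{P}^{\Box}$ realises $\Box_h$ as the complement of the zero section in a vector subbundle $\mathcal{E} \subset \mathcal{O}_{\mathcal{P}^{\Box}}^{N+1}$ of rank $N+2-2h$; all claimed properties of $\Box_h$ will then follow.

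Over $(p,[\ell_1],[\ell_2])\in \mathcal{P}^{\Box}\setminus \Delta^{\Box}$, choose direction vectors $v_i$ for $\ell_i$. The condition ``$\ell_i\subset X_F$ or $X_F\cdot \ell_i\geqslant hp$'' is equivalent to the linear equations $D_{v_i}^j F(p)=0$ for $j=0,\ldots,h-1$. The equation $F(p)=0$ is common to both lines, whereas for each $j\geqslant 1$ the functionals $D_{v_1}^j$ and $D_{v_2}^j$ on $S_d$ are linearly independent: in an affine chart centred at $p$ they pair with $(v_1^{\alpha})_{|\alpha|=j}$ and $(v_2^{\alpha})_{|\alpha|=j}$, which are linearly independent because the $j$-th Veronese map is injective on $\mathbb{P}^n$. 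Altogether this gives $1+2(h-1)=2h-1$ independent conditions on $F$, so the fiber of $\Psi$ is a linear subspace of $S_d$ of dimension $N+2-2h$.

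Over $(p,[\ell],[\Pi])\in \Delta^{\Box}$ with $\Pi=\langle v,w\rangle$, I take a one-parameter family $(\ell_1(t),\ell_2(t))\to(\ell,\Pi)$ with direction vectors $v_2(t)=v_1(t)+t\,w(t)$, $v_1(0)=v$, $w(0)=w$. Using commutativity of constant-coefficient operators,
\[
D_{v_2(t)}^j F(p)-D_{v_1(t)}^j F(p)=\sum_{k=1}^{j}\binom{j}{k}t^{k}D_{v_1(t)}^{j-k}D_{w(t)}^{k}F(p),
\]
so dividing by $t$ and letting $t\to 0$ extracts the extra limit conditions $D_v^{j-1}D_w F(p)=0$ for $j=1,\ldots,h-1$. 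Combined with the $h$ surviving equations $D_v^j F(p)=0$ for $j=0,\ldots,h-1$, these $2h-1$ equations are independent, because in affine coordinates with $v=\partial_1$ and $w=\partial_2$ they correspond to the vanishing of the distinct Taylor coefficients $y_1^j$ ($j=0,\ldots,h-1$) and $y_1^j y_2$ ($j=0,\ldots,h-2$). Hence the linear system cutting out $\Box_h$ has constant rank $2h-1$ on all of $\mathcal{P}^{\Box}$.

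This constant-rank condition over the smooth irreducible base $\mathcal{P}^{\Box}$ produces a vector subbundle $\mathcal{E}\subset \mathcal{O}_{\mathcal{P}^{\Box}}^{N+1}$ of rank $N+2-2h$, which is positive for $d,n\geqslant 2$ and $h\leqslant \min\{n+1,d\}$ by a direct estimate of $\binom{d+n+1}{n+1}$. Its total space is smooth and irreducible of dimension $(3n+1)+(N+2-2h)=3n+3+N-2h$, and $\Box_h$ is the complement of the zero section, so it inherits these properties and dimension. Surjectivity (hence dominance) of $\Psi$ is immediate from the positive fiber dimension. For the dominance of $\Phi$, Theorem \ref{theorem:dimV^h_p} gives $\dim V^h_{p,X_F}=n+2-h$ for general $F\in S_d^*$ and any $p\in X_F$; the base $\Lambda^h_p$ is either positive-dimensional (for $h\leqslant n$) or a zero-dimensional complete intersection of degree $2\cdot 3\cdots n=n!\geqslant 2$ (for $h=n+1$, using $n\geqslant 2$), so in either case $V^h_{p,X_F}$ contains at least two distinct lines through $p$ and $\Phi^{-1}(F)\neq \emptyset$. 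The main obstacle is the fiber analysis over the exceptional divisor $\Delta^{\Box}$---identifying the correct limiting linear conditions and verifying that the rank stays at $2h-1$, so that the vector-bundle structure extends across $\Delta^{\Box}$ without picking up extra components in the closure.
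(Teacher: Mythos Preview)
Your proof is correct and follows essentially the same approach as the paper: both analyze the fibers of $\Psi$ over $\mathcal{P}^{\Box}\setminus\Delta^{\Box}$ and over $\Delta^{\Box}$, verify in each case that exactly $2h-1$ independent linear conditions are imposed on $F$, and conclude that $\Box_h$ is (the complement of the zero section in) a vector subbundle of the trivial bundle $\mathcal{P}^{\Box}\times S_d$. Your treatment is slightly more explicit---Veronese injectivity for independence off $\Delta^{\Box}$, the binomial expansion for the limiting conditions, and a more careful case analysis for the dominance of $\Phi$---but the strategy and the key computation over $\Delta^{\Box}$ coincide with the paper's.
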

\begin{proof} Let $(p, [\ell_1], [\ell_2])\in \widetilde{\mathcal{P}} \setminus \widetilde{\Delta}$ and $F\in S_d^*$.
Requiring that $(p, [\ell_1], [\ell_2], [F])\in {\Theta}^{n+1}_h$ amounts to impose $2h-1$ independent linear conditions to $F$, corresponding to the conditions $\ell_1\cdot X_F\geqslant hp$ and $\ell_2\cdot X_F\geqslant hp$.  

We claim that the same happens at $(p, [\ell], [\Pi])\in \widetilde{\Delta}$, when we require $(p, [\ell], [\Pi], [F])\in {\Theta}^{n+1}_h$. 
In fact,  choose affine coordinates $(\eta,\zeta)$ on $\Pi$ such that $p=(0,0)$ and $\ell= V(\eta)$. Write $F|_{\Pi}=F_0+F_1+\dots + F_d$, where $F_i=\sum_{0 \leqslant j  \leqslant i} a_{i,j}\eta^{i-j}\zeta^j$ is a homogeneous polynomial of degree $i$.
Imposing the condition $X_F \cdot \ell \geqslant hp$ gives $a_{i,i}=0$, for any $0 \leqslant i  \leqslant h-1$. Now, consider a general line  $\ell' := V(\eta - t\zeta)$ in $\pi$ 
through $p=(0,0)$. 
Imposing the condition $X \cdot \ell'\geqslant hp$ and letting $\ell'$ approach $\ell$, i.e. letting $t$ approach zero, gives $a_{i,i-1}=0$ for any $1 \leqslant i \leqslant h-1$. 
Therefore, there are again $2h-1$ independent conditions for the coefficients of $F$ in order to have $(p, [\ell], [\Pi],[F])\in {\Theta}^{n+1}_h$.

Hence the projection $\Psi\colon{\Theta}^{n+1}_h\longrightarrow \widetilde{\mathcal{P}}$ is onto, and its fibers are parameterized by $(2h-1)$--codimensional linear subspaces of $\mathbb{P}(S_d)$. 
Therefore, ${\Theta}^{n+1}_h$ is smooth, irreducible, of dimension
\[
\dim ({\Theta}^{n+1}_h) = \dim (\widetilde{\mathcal{P}})+ \dim (\mathbb{P}(S_d)) - (2h-1)=3n+1+N-2h+1.
\]
The surjectivity of $\Phi\colon \Theta^{n+1}_h\longrightarrow  \mathbb{P}(S_d)$ is clear; indeed, for any $F \in S_d^*$ and any $p \in X_F$, we have that $\dim (V_{p,X_F}^h) \geqslant n+2 -h >0$, as $h \leqslant n+1$ by assumption.  
\end{proof}

For a general polynomial $F\in S_d^*$, we set
\begin{equation*}
{\Theta}_{h,F}^{n+1}:=\Phi^{-1}([F]),
\end{equation*} 
which is smooth, equidimensional, of dimension $3n+2-2h$. 

\smallskip
Now, we argue as in \cite[Proof of Theorem 2.3]{RY} and we prove Theorem \ref{theorem:conngonRY}.
\begin{proof}[Proof of Theorem \ref{theorem:conngonRY}]
Let $X\subset \mathbb{P}^{n+1}$ be a very general hypersurface of degree $d\geqslant 2n+2$ and let $h\in \mathbb{N}$ such that $\conngon(X)=d-h$.
It follows from Theorem \ref{theorem:conngon} that $h\geqslant \left\lfloor\frac{1+\sqrt{8n+1}}{2}\right\rfloor$.
We note that if $h= \left\lfloor\frac{1+\sqrt{8n+1}}{2}\right\rfloor$, we are done because $\left\lfloor\frac{1+\sqrt{8n+1}}{2}\right\rfloor\leqslant \left\lfloor\frac{\sqrt{16n+25}-3}{2}\right\rfloor$ for any $n\geqslant 4$. 
Hence we assume hereafter 
\begin{equation}\label{eq:h}
h> \left\lfloor\frac{1+\sqrt{8n+1}}{2}\right\rfloor. 
\end{equation}

Given two general points $x_1,x_2\in X$, there exists an irreducible curve $C\subset X$ containing $x_1$ and $x_2$ such that $\gon(C)=\conngon(X)=d-h$.
Since $h\geqslant 3$, by Proposition \ref{proposition:Cone} there exists a point $p\in X$ such that $C\subset V^h_{p}$, and the projection $\pi_p\colon V^h_{p}\dashrightarrow \Lambda_{p}^h$ from $p$ maps $C$ to a rational curve $D\subset \Lambda_{p}^h$.
In particular, if $\ell_1,\ell_2\subset V^h_{p}$ denote the lines connecting the vertex $p$ to the points $x_1,x_2\in X$, respectively, then $D$ passes through the corresponding points $\ell_1\cap \Lambda_{p}^h$ and $\ell_2\cap \Lambda_{p}^h$.
So, for any integer $t\geqslant n+1$, we consider the variety $\Theta^t_h$ defined as in \eqref{Theta_h} (using the obvious modifications $\mathbb{P}=\mathbb{P}^t$, $\mathbb{G}=\mathbb{G}(1,t)$, etc.), and we introduce the locus $\mathcal{R}_{t}\subset {\Theta}^t_{h}$ given by 
\begin{equation}\label{Rh}
\mathcal{R}_{t}:= \overline{\left\{ (p, [\ell_1], [\ell_2], [F])\in {\Theta}^t_{h} \left| \begin{array}{l}\ell_1\neq \ell_2 \text{ and }\exists \text{ a rational curve } D\subset \Lambda_{p,X_F}^h \text{ passing}\\ \text{through the points } \ell_1\cap \Lambda_{p,X_F}^h \text{ and } \ell_2\cap \Lambda_{p,X_F}^h\end{array}\right.\right\}}.
\end{equation} 
In particular, we are interested in the case $t=n+1$.

It follows from \cite[Lemma 2.2]{BCFS} that for $F\in S_d^*$ general and $p \in X_F$ general, the variety $ \Lambda_{p}^h= \Lambda_{p,X_F}^h$ is a general complete intersection of type $(1,1,2,\dots,h-1)$ in $\mathbb{P}^{n+1}$.
Thus its canonical bundle is isomorphic to $\mathcal{O}_{\Lambda^h_p}(\sum_{i=2}^{h-1}i-n)$, which is effective by \eqref{eq:iff} and \eqref{eq:h}.
In particular, $ \Lambda_{p}^h$ is not covered by rational curves, so that $\mathcal{R}_{n+1}$ consists of (at most) countably many proper closed subsets of ${\Theta}^{n+1}_{h}$.

Let $F\in S^*_d$ be the polynomial defining the very general hypersurface $X\subset \mathbb{P}^{n+1}$.
According to the discussion above, for general $x_1,x_2\in X$, we may find (at least) one 4--tuple $(p, [\ell_1], [\ell_2], [F])\in\mathcal{R}_{n+1}$, where $\ell_i=\langle p,x_i\rangle$ for $i=1,2$.
Since $\ell_i\cap X$ consists of finitely many points, as we vary the pair $(x_1,x_2)\in X\times X$, the corresponding 4--tuples $(p, [\ell_1], [\ell_2], [F])$ describe a subset of $\mathcal{R}_{n+1}\cap \Theta_{h,F}^{n+1}$ having dimension at least $2n$.
In particular, $\dim \left(\mathcal{R}_{n+1}\cap \Theta_{h,F}^{n+1}\right)\geqslant 2n$ and as $F\in S^*_d$ is very general, we deduce that $\dim \mathcal{R}_{n+1}\geqslant 2n+N$.
Therefore,
\begin{equation}
\label{eq:nec2}
\codim_{\Theta^{n+1}_{h}}\mathcal{R}_{n+1}\leqslant n+2-2h.
\end{equation}
We point out that for any subfamily $\mathcal{F}\subset \Theta^{n+1}_{h}$ such that $(p, [\ell_1], [\ell_2], [F])\in \mathcal{F}$, we have $\codim_{\mathcal{F}}\left(\mathcal{R}_{n+1}\cap \mathcal{F}\right)\leqslant \codim_{\Theta^{n+1}_{h}}\mathcal{R}_{n+1}$. 
Hence \eqref{eq:nec2} gives
\begin{equation}
\label{eq:nec3}
\codim_{\mathcal{F}}\left(\mathcal{R}_{n+1}\cap \mathcal{F}\right)\leqslant n+2-2h.
\end{equation}

\smallskip
We construct a subfamily $\mathcal{F}\subset \Theta^{n+1}_{h}$ with $(p, [\ell_1], [\ell_2], [F])\in\mathcal{F}$, as follows.
Let 
$$m:=\frac{h(h-1)}{2}-2$$ 
and let $(p', [\ell'_1], [\ell'_2], [F'])\in \Theta^{m+2}_h$ be a $4$--tuple such that $Y':=V(F')\subset \mathbb{P}^{m+2}$ is a very general hypersurface of degree $d$, $p'\in Y'$ is a very general point, $\ell'_1$ is very general among lines in $V^h_{p',Y'}$ passing through $p'$, and $\ell'_2\neq \ell'_1$.
Moreover, we deduce from \eqref{eq:h} that $m+2\geqslant n+1$.

Let $M\geqslant m+2\geqslant n+1$ and let $(p'', [\ell''_1], [\ell''_2], [F''])\in \Theta^{M}_h$, where $Y'':=V(F'')\subset \mathbb{P}^M$ is a hypersurface of degree $d$ such that $X$ is a $(n+1)$--plane section and $Y'$ is a $(m+2)$--plane section, with $p=p'=p''$, $\ell_1=\ell'_1=\ell''_1$, and $\ell_2=\ell'_2=\ell''_2$.

Now, for any $r\geqslant n+1$, let $Z_r\subset \mathrm{Hom}\left(\mathbb{P}^r, \mathbb{P}^M\right)$ be the set of parameterized $r$--planes in $\mathbb{P}^M$ containing the plane $\langle \ell_1,\ell_2\rangle$, and let $Z'_r\subset Z_r$ be the subset of parameterized $r$--planes $\Lambda\subset\mathbb{P}^M$ such that $(p, [\ell_1], [\ell_2], [F_\Lambda])\in \mathcal{R}_{r}$, where $F_\Lambda$ is a polynomial defining the section of $Y''$ by $\Lambda$ as a hypersurface in $\Lambda$.\footnote{As in \cite{RY}, we consider the locus $Z_r\subset \mathrm{Hom}\left(\mathbb{P}^r, \mathbb{P}^M\right)$---rather than its counterpart in $\mathbb{G}\left(r,M\right)$---because we need to fix homogeneous coordinates $[y_0:\ldots:y_r]$ on each $r$--plane $\Lambda\subset \mathbb{P}^M$, in order to define properly the polynomial $F_{\Lambda}\in \mathbb{C}[y_0,\dots,y_r]_d\smallsetminus\{0\}$ (up to scalar multiplication).}

We point out that for any $\Lambda\in Z_r$, we have $(p, [\ell_1], [\ell_2], [F_\Lambda])\in  \Theta^{r}_h$. 
To see this fact, consider the hypersurface $Y:=V(F_{\Lambda})=\Lambda\cap Y''$ of degree $d$ in $\Lambda\cong \mathbb{P}^{r}$.
For $i=1,2$, we have $\ell_i\subset \Lambda$, so the intersection schemes $\ell_i\cdot Y$ and $\ell_i\cdot Y''$ are supported on the same $0$--cycle of degree $d$, i.e. $\mult_q( \ell_i\cdot Y)=\mult_q( \ell_i\cdot Y'')$ for any $q\in Y\cap \ell_i$.
In particular, $Y\cdot \ell_i\geqslant hp$ for $i=1,2$, so that $(p, [\ell_1], [\ell_2], [F_\Lambda])\in  \Theta^{r}_h$.

As in \cite{RY}, let $\mathcal{F}$ be the image of $Z_{n+1}$ in $\Theta^{n+1}_h$ under the map sending a $(n+1)$--plane $\Lambda\in Z_{n+1}$ to the point $(p, [\ell_1], [\ell_2], [F_\Lambda])\in  \Theta^{n+1}_h$.
Thus $\mathcal{R}_{n+1}\cap \mathcal{F}$ is the image of $Z'_{n+1}$.
According to \eqref{eq:nec3}, we have 
\begin{equation}
\label{eq:nec4}
\codim_{Z_{n+1}}Z'_{n+1}\leqslant n+2-2h.
\end{equation}
Let $\varepsilon_{r}:= \codim_{Z_{r}}Z'_{r}$. 
Since $Y'\subset \mathbb{P}^{m+2}$ and $p\in Y'$ are very general, then $\Lambda^h_{p,Y'}$ is a smooth complete intersection of type $(1,1,2,\dots,h-1)$ in $\mathbb{P}^{m+2}$ by \cite[Lemma 2.2]{BCFS}. 
Hence its canonical bundle is $\mathcal{O}_{\Lambda^h_{p,Y'}}(\sum_{i=2}^{h-1}i-m-1)$, which is trivial by the choice of $m$. 
In particular, $\Lambda^h_{p,Y'}$ is not covered by rational curves and as $\ell_1\subset V^h_{p,Y'}$ is a very general line through $p$, there are no rational curves of $\Lambda^h_{p,Y'}$ passing through the point $\ell_1\cap \Lambda^h_{p,Y'}$.
Thus $(p, [\ell_1], [\ell_2], [F'])\not\in \mathcal{R}_{m+2}$ and $\varepsilon_{m+2}\geqslant 1$.

Applying \cite[Proposition 2.5]{RY}, we obtain
\begin{equation*}
\varepsilon_{m+1}=\codim_{Z_{m+1}}Z'_{m+1}\geqslant \varepsilon_{m+2}+1\geqslant 2,
\end{equation*}
and by recursion
\begin{equation*}
\varepsilon_{n+1}=\codim_{Z_{n+1}}Z'_{n+1}\geqslant m-n+ 2.
\end{equation*}
By \eqref{eq:nec4}, we must have $m-n+ 2\leqslant n+2-2h$, and as $m:=\frac{h(h-1)}{2}-2$, we deduce
$$
\frac{h(h-1)}{2}-n\leqslant n+2-2h,\quad\text{so that}\quad h\leqslant \frac{-3+\sqrt{16n+25}}{2}.
$$
Thus the connecting gonality of $X$ satisfies $\conngon(X)\geqslant d- \left\lfloor\frac{-3+\sqrt{16n+25}}{2}\right\rfloor$.

The final part of the statement is achieved by using \eqref{eq:irr&covgon} and noting that $\left\lfloor\frac{-1+\sqrt{16n+1}}{2}\right\rfloor=\left\lfloor\frac{-3+\sqrt{16n+25}}{2}\right\rfloor$ if and only if $n$ belongs to the set 
$$\left\{\left.4a^2+3a,4a^2+5a,4a^2+5a+1,4a^2+7a+2,4a^2+9a+4,4a^2+11a+6\right|a\in\mathbb{N}\right\}.$$
\end{proof}

Finally, we discuss the values of $\conngon(X)$, when the hypersurface $X$ has small dimension.

\begin{example}
\label{example:small n}
Let $X\subset \mathbb{P}^{n+1}$ be a very general hypersurface of degree $d\geqslant 2n+2$, with $1 \leqslant n \leqslant 16$ and $n\neq 9,13,14$.\\
\textbf{Case \emph{n}$=$1.} When $X$ is a plane curve, $\conngon(X)$ equals the gonality of $X$, which is $\gon(X)=d-1$ (cf. \cite[Teorema 3.14]{Ci2}).\\
\textbf{Case \emph{n}$=$2.} The connecting gonality of very general surfaces $X\subset \mathbb{P}^3$ of degree $d\geq 5$ is computed by tangent hyperplane sections $X\cap T_pX$, so that $\conngon(X)=d-2$ (see e.g. \cite{B2}).\\
\textbf{Case \emph{n}$=$3.} When $n=3$, we have $\conngon(X)=d-2$. 
To see this fact, notice that $\conngon(X)\leqslant d-2$ by Remark \ref{remark:nm} and case $n=2$ above.
On the other hand, $\conngon(X)\geqslant \covgon(X)= d-3$ by \eqref{eq:inequalities} and \eqref{eq:irr&covgon}.
Suppose by contradiction that there exists a connecting family $\mathcal{C}\stackrel{\pi}{\longrightarrow} T$ of $(d-3)$--gonal curves.
Then Proposition \ref{proposition:Cone} ensures that the general curve $C_t:=\pi^{-1}(t)$ lies on $X\cap V^3_p$ for some $p\in X$.
By Theorem \ref{theorem:dimV^h_p}, the varieties $Z_p:=X\cap V^3_p$ are curves and, as we vary $p\in X$, we obtain a 3--dimensional family.
However, according to Remark \ref{remark:dimension}, the family $\mathcal{C}\stackrel{\pi}{\longrightarrow} T$ should have dimension at least 4, a contradiction.\\
\textbf{Cases 4\!\:$\leqslant$\!\:\emph{n}\!\:$\leqslant$\!\:16 with \emph{n}$\neq$9,\!\:13,\!\:14.}
For all these values of $n$, we may apply Theorems \ref{theorem:conngon} and \ref{theorem:conngonRY}, and the bounds included therein coincide.
Thus 
$$
\conngon(X)=\left\{\begin{array}{ll}
d-3 & \text{if } n=4,5   \\
d-4 & \text{if } n=6,7,8 \\   
d-5 & \text{if } n=10,11,12 \\
d-6 & \text{if } n=15,16. \\
\end{array}\right.
$$
\end{example}

\section*{Acknowledgements}

We would like to thank Lawrence Ein, Enrico Fatighenti and Francesco Russo for helpful discussions.


\begin{thebibliography}{}
\bibitem{B2} F. Bastianelli, \emph{On irrationality of surfaces in $\mathbb{P}^3$}, J. Algebra \textbf{488} (2017), 349--361.
\bibitem{BCD} F. Bastianelli, R. Cortini, P. De Poi, \emph{The gonality theorem of Noether for hypersurfaces}, J. Algebraic Geom. \textbf{23} (2014), 313--339.
\bibitem{BDELU} F. Bastianelli, P. De Poi, L. Ein, R. Lazarsfeld, B. Ullery, \emph{Measures of irrationality for hypersurfaces of large degree}, Compos. Math. \textbf{153} (2017), 2368--2393.
\bibitem{BCFS} F. Bastianelli, C. Ciliberto, F. Flamini, P. Supino, \emph{Gonality of curves on general hypersurfaces}, J. Math. Pures Appl. \textbf{125} (2019), 94--118.
\bibitem{Ch} N. Chen, \emph{Degree of irrationality of very general abelian surfaces}, Algebra Number Theory \textbf{13} (2019), no. 9, 2191--2198.
\bibitem{Ci} C. Ciliberto, \emph{Osservazioni su alcuni classici teoremi di unirazionalit\`a per ipersuperficie e complete intersezioni algebriche proiettive}, Ricerche di Mat. \textbf{29} (1980), 175--191.
\bibitem{Ci2} C. Ciliberto, \emph{Alcune applicazioni di un classico procedimento di Castelnuovo}, Seminari di geometria, 1982--1983 (Bologna, 1982/1983), Univ. Stud. Bologna, Bologna, 1984, 17--43.
\bibitem{CMNP} E. Colombo, O. Martin, J. C. Naranjo, G. P. Pirola, \emph{Degree of irrationality of a very general abelian variety}, arXiv:1906.11309 (2019).
\bibitem{Cl} H. Clemens, \emph{Curves on generic hypersurfaces}, Ann. Sci. \'Ecole Norm. Sup. \textbf{19} (1986), 629--636.
\bibitem{GK} F. Gounelas, A. Kouvidakis, \emph{Measures of irrationality of the Fano surface of a cubic threefold}, Trans. Amer. Math. Soc. \textbf{371} (2019), 711--733.
\bibitem {GH} P. Griffiths, J. Harris, \emph{Algebraic geometry and local differential geometry}, Ann. Sci. \'Ecole Norm. Sup. \textbf{12} (1979), no. 3, 355--452.
\bibitem {H} J. Harris, \emph{Curves in projective space}, Les Presses de L'Universit\'e de Montr\`eal, 1982. 
\bibitem {J1} G. Jennings, \emph{Lines having high contact with a projective variety}, Pacific J. Math. \textbf{125} (1986), no. 1, 103--115.
\bibitem {J2} G. Jennings, \emph{Lines having contact four with a projective hypersurface}, Pacific J. Math. \textbf{129} (1987), no. 2, 321--336.
\bibitem {M} O. Martin, \emph{On a conjecture of Voisin on the gonality of very general abelian varieties}, Adv. Math. \textbf{369} (2020), 107173, 35 pp.
\bibitem{SU} D. Stapleton, B. Ullery, \emph{The degree of irrationality of hypersurfaces in various Fano varieties}, Manuscripta Math. \textbf{161} (2020), no. 3, 377--408.
\bibitem{V} C. Voisin, \emph{Chow ring and gonality of general abelian varieties}, Ann. H. Lebesgue \textbf{1} (2018), 313--332.
\bibitem{Y} R.  Yang, \emph{On irrationality of hypersurfaces in $\mathbf{P}^{n+1}$}, Proc. Amer. Math. Soc. \textbf{147} (2019), no. 3, 971--976. 
\bibitem{KMM} J. Koll\'ar, Y. Miyaoka, S. Mori, \emph{Rational connectedness and boundedness of Fano manifolds}, J. Differential Geom. \textbf{36} (1992), 765--779.
\bibitem{RY} E. Riedl, D. Yang, \emph{Rational curves on general type hypersurfaces},  J. Differential Geom. \textbf{116} (2020), 393--403.
\end{thebibliography}
\end{document}